\definecolor{linkblue}{HTML}{003d73}
\definecolor{linkgreen}{HTML}{006161}
\definecolor{linkred}{HTML}{a11950}
\definecolor{csuorange}{HTML}{D9782D}
\definecolor{csugold}{HTML}{C8C372}
\definecolor{csublue}{HTML}{12A4B6}
\newtheorem{theorem}{Theorem}
\newtheorem*{theorem*}{Theorem}
\newtheorem{cor}[theorem]{Corollary}
\newtheorem*{cor*}{Corollary}
\newtheorem*{moments}{Theorem~\ref*{thm:moments}}
\newtheorem*{asymptotics}{Corollary~\ref*{cor:asymptotic moments}}
\newtheorem*{mgf}{Theorem~\ref*{thm:mgf}}
\newtheorem*{ballvol}{Theorem~\ref*{thm:ball volume}}
\theoremstyle{definition}
\newcommand{\R}{\mathbb{R}}
\newcommand{\C}{\mathbb{C}}
\newcommand{\N}{\mathbb{N}}
\newcommand{\Sph}{\mathbb{S}}
\newcommand{\vol}{\operatorname{Vol}}
\newcommand{\J}{\mathbf{j}}
\newcommand{\Fl}{F\ell}
\newcommand{\dVol}{\operatorname{dVol}}
\newcommand{\du}{\,\mathrm{d}u}
\newcommand{\dtheta}{\,\mathrm{d}\theta}
\newcommand{\deta}{\,\mathrm{d}\eta}
\newcommand*\pFqskip{8mu}
\newcommand*\pFq{\begingroup
        \catcode`\,\active
        \def ,{\mskip\pFqskip\relax}%
        \dopFq
}
\def\dopFq#1#2#3#4#5{%
        {}_{#1}\nobreak\hspace{-.05em}F_{#2}\biggl[\genfrac..{0pt}{}{#3}{#4};#5\biggr]%
        \endgroup
}
\title{Distributions of Distances and Volumes of Balls in Homogeneous Lens Spaces}
\author{Brenden Balch}
\author{Chris Peterson}
\author{Clayton Shonkwiler}
\affil{Department of Mathematics, Colorado State University, Fort Collins, CO}
\date{}
\begin{document}
	
\maketitle

\begin{abstract}
Lens spaces are a family of manifolds that have been a source of many interesting phenomena in topology and differential geometry. Their concrete construction, as quotients of odd-dimensional spheres by a free linear action of a finite cyclic group, allows a deeper analysis of their structure.
In this paper, we consider the {\it problem of moments} for the distance function between randomly selected pairs of points on homogeneous three-dimensional lens spaces. We give a derivation of a recursion relation for the moments, a formula for the $k^{\text{th}}$ moment, and a formula for the moment generating  function, as well as an explicit formula for the volume of balls of all radii in these lens spaces.	
\end{abstract}

\

\section{Introduction} Given a set of data, what is the best guess for the random process that produced the data? Attempts to answer special cases of this question have motivated new developments in statistics, mathematics, and machine learning. As a starting point, one would like to understand whether the observed data has a distribution differing from what is ``expected.'' However, determining what is expected can be quite subtle when the data takes values on a manifold, though when the manifold is homogeneous, there are additional tools that one can use to simplify the problem. At an intuitive level, a homogeneous manifold is a space in which each point is indistinguishable from any other point.

For distance data, one would ideally like to check whether the distribution of pairwise distances is compatible with the corresponding distribution on the manifold. In a previous paper~\cite{BPS}, we considered the problem of computing the expected distances between randomly drawn points on manifolds of partially oriented flags. These manifolds generalize projective spaces and other Grassmannians and form a large family of homogeneous spaces. The examples in which we had the most success computing expected distances turn out to be (scaled) lens spaces; that is, quotients of an odd-dimensional sphere by the free action of a cyclic group. In this paper we go beyond simple expectations and determine precisely the distributions of distances between pairs of random points in all homogeneous three-dimensional lens spaces.

These distributions are examples of \emph{distance distributions} (or sometimes \emph{shape distributions} or \emph{distance histograms}), which make sense on arbitrary metric measure spaces, and are often used for geometric classification and shape analysis~\cite{Berrendero:2016di,Bonetti:2005bm,Boutin:2004gb,Brinkman:2012ef,Memoli:2011jg,Memoli:2018wk,Osada:2002tq}. Our results provide a strong statistical baseline against which to compare data on lens spaces, which have recently been applied to data science~\cite{Polanco:2019ug}, appear frequently in the cosmography literature~\cite{AFS12, Aurich:2012by, TOM19, Uzan:2004dr}, and are the natural setting for spherical data with cyclic symmetries.

To establish notation, each pair of positive integers $(n,m)$ with $n>m$ and ${\gcd(n,m)=1}$ determines a three-dimensional lens space $L(n;m)$ which is a quotient of the 3-sphere $\Sph^3$ by the cyclic group of order $n$. By requiring the quotient to be a Riemannian submersion, we induce a Riemannian metric on $L(n;m)$, which turns out to be homogeneous when $m=1$ or $n-1$. Moreover, $L(n;1)$ and $L(n;n-1)$ are isometric, so to understand distance distributions on homogeneous lens spaces it suffices to consider those lens spaces of the form $L(n;1)$.

As a first step, we determine all moments of distance (i.e., expected values of powers of distance) by solving a recurrence relation that they satisfy:
\begin{theorem}\label{thm:moments} 
	For each $k \geq 0$ and each $n \geq 2$, the $k$th moment of distance on $L(n;1)$ is
	\begin{multline*}
		I_{n,k} = \frac{1}{(k+1)(k+2)}\!\left[ \frac{4}{k+3}\! \left(\frac{\pi}{n}\right)^{\!\!k+2}\!\!\! \pFq{1}{2}{1}{\frac{k+4}{2}, \frac{k+5}{2}}{-\frac{\pi^2}{n^2}} \right. \\
		\left. + \tan\frac{\pi}{n} \left(\! n\! \left(\frac{\pi}{2}\right)^{\!\!k+1} \!\!\!\pFq{1}{2}{1}{\frac{k+3}{2},\frac{k+4}{2}}{-\frac{\pi^2}{4}} - 2\!\left(\frac{\pi}{n}\right)^{\!\!k+1} \!\!\!\pFq{1}{2}{1}{\frac{k+3}{2},\frac{k+4}{2}}{-\frac{\pi^2}{n^2}}\right)\!\right], 
	\end{multline*}
	where for $n=2$ this is interpreted as the limit of the above expression as $n \to 2$, and ${}_{1}\nobreak\hspace{-.05em}F_{2}$ is a hypergeometric function whose definition we recall on \cpageref{hypergeometeric def} below.
\end{theorem}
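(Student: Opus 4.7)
The plan is to use the homogeneity of $L(n;1)$ to reduce the moment computation to a one-dimensional integral in the distance variable, and then evaluate the resulting trigonometric integrals in terms of $_1F_2$ hypergeometric functions.

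First I would parametrize $\Sph^3 \subset \C^2$ by $(z_1, z_2) = (\cos\eta\, e^{\I\phi_1}, \sin\eta\, e^{\I\phi_2})$ with $\eta \in [0,\pi/2]$ and $\phi_j \in [0, 2\pi)$, in which the generator of the $\mathbb{Z}_n$-action sends $(\phi_1, \phi_2)$ to $(\phi_1 + 2\pi/n, \phi_2 + 2\pi/n)$. Since $L(n;1)$ is homogeneous, the first random point can be fixed at $[(1,0)]$, and then the $L(n;1)$-distance from $[(1,0)]$ to $[(z_1,z_2)]$ equals $\arccos(\cos\eta\cos\psi)$, where $\psi$ is the reduction of $\phi_1$ modulo $2\pi/n$ to $[-\pi/n, \pi/n]$; note that $\phi_2$ drops out. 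Integrating against the uniform density on $\Sph^3$ and using the resulting marginal density for $\eta$ and $\psi$ gives
\begin{equation*}
	I_{n,k} = \frac{n}{\pi}\int_0^{\pi/n}\!\int_0^{\pi/2} \arccos(\cos\eta\cos\psi)^k \sin(2\eta)\,d\eta\,d\psi.
\end{equation*}

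Next I would change variables from $\eta$ to the distance $r = \arccos(\cos\eta\cos\psi)$; the identity $\cos r = \cos\eta\cos\psi$ gives $\sin(2\eta)\,d\eta = \sec^2\psi\,\sin(2r)\,dr$ and the image is $\{(\psi, r) : 0 \le \psi \le \pi/n,\ \psi \le r \le \pi/2\}$. Swapping the order of integration, performing the $\psi$-integral against $\sec^2\psi$, and using $\sin(2r)\tan r = 1 - \cos(2r)$ yields the one-dimensional form
\begin{equation*}
	I_{n,k} = \frac{n}{\pi}\left[\int_0^{\pi/n} r^k(1-\cos 2r)\,dr + \tan\frac{\pi}{n}\int_{\pi/n}^{\pi/2} r^k \sin(2r)\,dr\right].
\end{equation*}
Two applications of integration by parts on the basic pieces $\int_0^a r^m \sin(2r)\,dr$ and $\int_0^a r^m \cos(2r)\,dr$ produce a two-step recurrence in $m$, which furnishes the recursion relation promised by the theorem. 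For the closed form, I would instead expand $\sin(2r)$ and $\cos(2r)$ as Taylor series and integrate termwise, then identify the resulting series as $_1F_2$-type hypergeometric functions in $-a^2$ by matching Pochhammer coefficients via $(2j)! = 4^j j! (1/2)_j$ and $(2j+1)! = 4^j j! (3/2)_j$. Setting $a = \pi/2$ kills $\sin(2a)$ and immediately produces the $n(\pi/2)^{k+1}\,{}_1F_2(1; \tfrac{k+3}{2}, \tfrac{k+4}{2}; -\pi^2/4)$ term in the theorem.

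The main obstacle is the algebra at $a = \pi/n$, where $\sin(2\pi/n)$ and $\cos(2\pi/n)$ do not vanish: a priori one ends up with several $_1F_2$ terms at different Pochhammer parameters that must collapse to the stated two. The key simplifications are the trigonometric identities $\tan(\pi/n)\sin(2\pi/n) = 1-\cos(2\pi/n)$ and $\tan(\pi/n)\cos(2\pi/n) - \sin(2\pi/n) = -\tan(\pi/n)$, together with the contiguous hypergeometric relation
\begin{equation*}
	1 - {}_1F_2\!\left(1; \tfrac{k+2}{2}, \tfrac{k+3}{2}; z\right) = -\frac{4z}{(k+2)(k+3)}\,{}_1F_2\!\left(1; \tfrac{k+4}{2}, \tfrac{k+5}{2}; z\right),
\end{equation*}
which follows by shifting the summation index. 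Applying these, the $(1-\cos 2a)$ piece and the $\tan(\pi/n)\sin(2a)$ piece combine into a single ${}_1F_2(1; \tfrac{k+4}{2}, \tfrac{k+5}{2}; -\pi^2/n^2)$ term, and the $\sin(2a)$ and $\tan(\pi/n)\cos(2a)$ contributions to the other ${}_1F_2$ parameter combine into the $-2(\pi/n)^{k+1}\,{}_1F_2(1; \tfrac{k+3}{2}, \tfrac{k+4}{2}; -\pi^2/n^2)$ term, producing exactly the stated formula. The $n=2$ case is then an indeterminate $\infty \cdot 0$, interpreted as a limit; directly, $L(2;1) \cong \mathbb{RP}^3$ and $I_{2,k} = \tfrac{2}{\pi}\int_0^{\pi/2} r^k(1-\cos 2r)\,dr$ can be used as an independent check.
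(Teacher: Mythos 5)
Your reduction to the one-dimensional integral
\[
I_{n,k} \;=\; \frac{n}{\pi}\left[\int_0^{\pi/n} r^k(1-\cos 2r)\,dr \;+\; \tan\frac{\pi}{n}\int_{\pi/n}^{\pi/2} r^k\sin 2r\,dr\right]
\]
is correct and essentially matches the paper's starting point (the paper reaches the same iterated integral via the substitution $\cos u=\cos\theta_1\cos\eta$; your extra swap of the order of integration has the nice side effect of producing the density of distance directly). The gap is in the final step. If you expand $\cos 2r$ and $\sin 2r$ in Taylor series and integrate termwise, the Pochhammer bookkeeping you describe gives
\[
\int_0^a r^k\cos 2r\,dr = \frac{a^{k+1}}{k+1}\,{}_1F_2\!\left(\tfrac{k+1}{2};\tfrac12,\tfrac{k+3}{2};-a^2\right),
\qquad
\int_0^a r^k\sin 2r\,dr = \frac{2a^{k+2}}{k+2}\,{}_1F_2\!\left(\tfrac{k+2}{2};\tfrac32,\tfrac{k+4}{2};-a^2\right),
\]
i.e.\ hypergeometric functions whose upper parameter depends on $k$ and which carry $\tfrac12$ or $\tfrac32$ as a lower parameter. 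These are not the functions in the statement, all of which have upper parameter $1$; for $k=1$, for instance, the second one degenerates to ${}_0F_1(;\tfrac52;-a^2)$, which is visibly not ${}_1F_2(1;2,\tfrac52;-a^2)$. Moreover, in this termwise route no boundary terms $\sin 2a$, $\cos 2a$ ever appear, so the trigonometric identities you invoke have nothing to act on, and the contiguous relation you cite only shifts among functions that already have upper parameter $1$ — it cannot convert the functions above into that family.

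What is missing is the mechanism that actually produces both the boundary terms and the upper-parameter-$1$ functions: you must carry the integration by parts all the way down (equivalently, solve the two-step recurrence you mention, which is exactly what the paper does), expressing each integral as $\sin 2a$ and $\cos 2a$ times explicit alternating \emph{partial sums} of the sine and cosine series plus a constant. Only then do your identities at $a=\pi/n$ and $a=\pi/2$ apply, and the stated ${}_1F_2(1;\cdot,\cdot;\cdot)$'s arise as the \emph{tails} of those series, via
\[
\sum_{j=m+1}^{\infty}\frac{(-1)^j x^{2j+1}}{(2j+1)!} \;=\; \frac{(-1)^{m+1}x^{2m+3}}{(2m+3)!}\,{}_1F_2\!\left(1;m+2,m+\tfrac52;-\tfrac{x^2}{4}\right)
\]
and its cosine analogue. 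Your ``instead'' discards precisely this step: the closed form your termwise expansion yields is a correct but different expression for $I_{n,k}$, and the proposal supplies no valid identification of it with the formula being proved.
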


The alternating finite sum formula given in~\eqref{eq:first order solution} is typically more useful for small $k$, but one virtue of this formulation in terms of hypergeometric functions is that it is easy to extract asymptotic information:

\begin{cor}\label{cor:asymptotic moments}
	As $k \to \infty$ the $k$th moment of distance grows like
$$I_{2,k} \sim \frac{2}{k} \!\left(\frac{\pi}{2}\right)^{\!\!k} \ \   {\rm and\  }  \ \ 	I_{n,k} \sim \frac{n}{k^2}\!\left(\frac{\pi}{2}\right)^{\!\!k+1} \!\!\!\!\tan \frac{\pi}{n}\ \ \ {\rm for} \ n\geq 3. $$
\end{cor}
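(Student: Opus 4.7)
The strategy is to extract the leading large-$k$ behavior directly from the formula in Theorem~\ref{thm:moments}, exploiting the fact that each of the hypergeometric factors approaches $1$ as $k \to \infty$. Concretely, for fixed $z \in \C$, one has
\[
\pFq{1}{2}{1}{b_1,b_2}{z} = 1 + O\!\left(\tfrac{1}{b_1 b_2}\right) \quad \text{as } b_1, b_2 \to \infty,
\]
since $(1)_j = j!$ reduces the defining series to $\sum_{j\ge 0} z^j / [(b_1)_j (b_2)_j]$, whose $j \ge 1$ terms are bounded by $|z|^j/(b_1 b_2)^j$. With $b_1, b_2$ of order $k/2$, each of the three ${}_1F_2$ factors in the theorem equals $1 + O(1/k^2)$; moreover, the same termwise estimate, applied to the derivative series, shows that any fixed-order $u$-derivative of $\pFq{1}{2}{1}{b_1,b_2}{-u^2}$ is also $O(1/k^2)$.

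For $n \ge 3$ I would then simply compare exponential scales. The bracket in Theorem~\ref{thm:moments} contains three summands of orders $(\pi/n)^{k+2}$, $(\pi/2)^{k+1}$, and $(\pi/n)^{k+1}$; since $\pi/n < \pi/2$ strictly, the middle one dominates exponentially, and the bracket is asymptotic to $n \tan(\pi/n)(\pi/2)^{k+1}$. Dividing by $(k+1)(k+2) \sim k^2$ gives $I_{n,k} \sim (n/k^2)(\pi/2)^{k+1}\tan(\pi/n)$.

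The case $n=2$ is more delicate: $\tan(\pi/n)$ is singular at $n=2$ while the bracket it multiplies vanishes there (the two subtracted terms coincide), producing a $0 \cdot \infty$ form. I would treat Theorem~\ref{thm:moments} as a limit and use $\tan(\pi/n) \sim 4/[\pi(n-2)]$ near $n=2$; writing
\[
g(n) := n (\pi/2)^{k+1} B(\pi/2) - 2 (\pi/n)^{k+1} B(\pi/n), \qquad B(u) := \pFq{1}{2}{1}{\frac{k+3}{2},\frac{k+4}{2}}{-u^2},
\]
one computes $\lim_{n \to 2}\tan(\pi/n)\, g(n) = (4/\pi) g'(2)$, and direct differentiation yields
\[
g'(2) = (k+2)(\pi/2)^{k+1} B(\pi/2) + (\pi/2)^{k+2} B'(\pi/2).
\]
By the first paragraph (applied to both $B$ and $B'$) the first summand dominates as $k \to \infty$; the remaining non-singular term in Theorem~\ref{thm:moments} scales like $(\pi/2)^{k+2}/k^3$ and is also subleading. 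Dividing by $(k+1)(k+2)$ then yields $I_{2,k} \sim \tfrac{4}{\pi(k+1)}(\pi/2)^{k+1} = (2/k)(\pi/2)^k$. The main obstacle is the bookkeeping in this $n=2$ case: one must track which contributions in the Taylor expansion are of leading order in $k$ and verify that $B'(\pi/2)$ is genuinely of lower order than $(k+2) B(\pi/2)$, so that its contribution to $g'(2)$ is absorbed into the error term rather than contaminating the main asymptotic.
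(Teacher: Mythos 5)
Your argument is correct. For $n\geq 3$ it is essentially the paper's argument: the paper likewise observes that each $\pFq{1}{2}{1}{b_1,b_2}{z}$ with lower parameters of order $k/2$ tends to $1$ (``retaining only the leading terms in the power series representations of the hypergeometric functions''), so that the exponentially dominant middle term $n(\pi/2)^{k+1}\tan\frac{\pi}{n}$ divided by $(k+1)(k+2)\sim k^2$ gives the claim; your explicit bound $\sum_{j\geq 1}|z|^j/[(b_1)_j(b_2)_j]=O(1/(b_1b_2))$ just makes this quantitative. Where you genuinely diverge is the $n=2$ case. The paper does not re-take the $n\to 2$ limit at this point: it has already recorded the limit in closed form as Corollary~\ref{cor:n=2 moments}, namely $I_{2,k}=\frac{1}{k+1}(\pi/2)^k\bigl[2\,\pFq{1}{2}{1}{\frac{k+3}{2},\frac{k+4}{2}}{-\frac{\pi^2}{4}}+O(1/k^2)\bigr]$, from which $I_{2,k}\sim\frac{2}{k}(\pi/2)^k$ is immediate. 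You instead resolve the $0\cdot\infty$ form directly from the $n\geq 3$ formula via $\tan\frac{\pi}{n}\sim\frac{4}{\pi(n-2)}$ and the derivative $g'(2)=(k+2)(\pi/2)^{k+1}B(\pi/2)+(\pi/2)^{k+2}B'(\pi/2)$; your computation of $g'(2)$ checks out, and your termwise estimate correctly shows $B'(\pi/2)=O(1/k^2)$ is negligible against $(k+2)B(\pi/2)$. Your route is more laborious but self-contained given only the statement of Theorem~\ref{thm:moments} (read as a limit at $n=2$); the paper's route buys brevity at the cost of first establishing the separate closed form of Corollary~\ref{cor:n=2 moments}. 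In effect your $g'(2)$ computation reconstructs the leading term of that corollary.
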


A more attractive and systematic packaging of the moments is in the form of the moment-generating function of distance:

\begin{theorem}\label{thm:mgf}
	The moment-generating function of distance on $L(n;1)$ is
	\[
		M_n(t) = \begin{cases} \frac{4}{\pi(4+t^2)}\! \left(\!\frac{2(e^{t \pi/2}\!-1)}{t} + te^{t\pi/2}\! \right)  & \text{if } n=2\\
		\frac{2n}{\pi(4+t^2)}\! \left(\! \frac{2(e^{t \pi/n}\!-1)}{t} + \tan \frac{\pi}{n} \left(e^{t\pi/2}\!-e^{t \pi/n}\right)\!\!\right) & \text{if } n\geq 3.\end{cases}
	\]
\end{theorem}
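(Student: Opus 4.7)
The plan is to compute $M_n(t) = \sum_{k \geq 0} (t^k/k!)\,I_{n,k}$ by substituting the closed form from Theorem~\ref{thm:moments} and carefully reordering the resulting double sum. Each of the three terms appearing in $I_{n,k}$ (call them $A_k$, $B_k$, $C_k$) involves a ${}_1F_2$ with parameters of the form $(1; (k+a)/2, (k+a+1)/2; -x^2)$, so I would first use the Pochhammer duplication identity $((k+a)/2)_j \, ((k+a+1)/2)_j = (k+a-1+2j)!/[(k+a-1)!\,4^j]$ to collapse each hypergeometric into a single-index series $\sum_j c_j/(k+N+2j)!$. The factorial growth in the denominator telescopes cleanly against the outer $1/[(k+1)(k+2)]$ (and against $1/[(k+1)(k+2)(k+3)]$ for the $A_k$-term).

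After swapping the order of summation, the inner sum over $k$ becomes the tail of an exponential series and evaluates to $e^{t\pi/n} - P_{2+2j}(t\pi/n)$ or $e^{t\pi/2} - Q_{1+2j}(t\pi/2)$ for appropriate truncated exponentials $P_N, Q_N$. The remaining outer sum over $j$ then splits into two pieces: the pure-exponential part is the geometric series $\sum_j (-4/t^2)^j = t^2/(t^2+4)$, which accounts directly for the factor $(4+t^2)^{-1}$ in the final answer, while the truncation corrections reassemble (after reindexing against the powers of $t\pi/n$ or $t\pi/2$) into the standard series for $\cos(2\pi/n)$, $\sin(2\pi/n)$, and their special values at $\pi$.

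Combining the $A$-, $B$-, and $C$-contributions, the trigonometric residues cancel through the identities $\tan(\pi/n)\cos(\pi/n) = \sin(\pi/n)$ and $2\tan(\pi/n)\cos^2(\pi/n) = \sin(2\pi/n)$, leaving precisely the exponentials $e^{t\pi/n}$ and $e^{t\pi/2}$ in the stated form. The $n=2$ case is recovered by taking the $n\to 2$ limit of the indeterminate form $\tan(\pi/n)(e^{t\pi/2}-e^{t\pi/n})$, which is easily seen to equal $te^{t\pi/2}$ and supplies the extra term in $M_2(t)$.

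The main obstacle is bookkeeping: tracking the partial-sum boundaries $2+2j$ and $1+2j$ cleanly through the summation swap, and verifying that the trigonometric residues between the $A$, $B$, and $C$ pieces conspire to cancel. An attractive alternative would sidestep Theorem~\ref{thm:moments} entirely by writing $M_n(t) = \vol(L(n;1))^{-1}\int_0^{\pi/2} e^{tr}\, V'(r)\,\mathrm{d}r$ and using the piecewise ball-volume formula from Theorem~\ref{thm:ball volume} directly; since $V'$ is proportional to $\sin^2(r)$ on $[0,\pi/n]$ (with a similarly elementary formula on $[\pi/n,\pi/2]$), the denominator $4+t^2$ appears naturally from $\int e^{tr}\sin(2r)\,\mathrm{d}r$, and the boundary values at $\pi/n$ and $\pi/2$ produce the exponential factors $e^{t\pi/n}$ and $e^{t\pi/2}$.
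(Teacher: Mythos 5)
Your primary route is genuinely different from the paper's, which never touches the hypergeometric formula: the paper computes $M_n(t)=\E[e^{td}]$ directly as the double integral $\frac{2n}{\pi}\int_0^{\pi/n}\sec^2\theta_1\int_{\theta_1}^{\pi/2}e^{tu}\cos u\sin u\du\dtheta_1$ (the same substitution that produced~\eqref{eq:integral}), applies $2\sin u\cos u=\sin 2u$, integrates by parts twice, and observes that one further integration by parts cancels the leftover boundary terms\,---\,a half-page computation. Your series-resummation plan should in principle reach the same answer, but it is far heavier and contains one step that fails as written: the ``pure-exponential part'' $\sum_j(-4/t^2)^j$ is a geometric series converging only for $|t|>2$, so you cannot split the $j$-sum into a geometric piece plus truncation corrections for all $t$. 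You must either keep the tail-corrected terms $(-4/t^2)^j\bigl(e^{x}-P_{2+2j}(x)\bigr)$ together (these decay like $1/(2j+3)!$ and give a convergent sum for every $t$), or prove the identity on $|t|>2$ and extend by analyticity, since both sides are entire ($d$ is bounded by $\pi/2$). Either fix is routine, but it has to be said; and the trigonometric cancellations you defer to ``bookkeeping'' are exactly where such an argument lives or dies, so the proposal as it stands is a plan rather than a proof. Your treatment of $n=2$ as the limit $n\to2$ of the indeterminate form does agree with the paper.

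Your proposed shortcut via \Cref{thm:ball volume} is circular in the paper's logical order: the ball-volume formula is \emph{derived from} the moment-generating function (MGF $\to$ pdf by inverse Laplace transform $\to$ cdf $\to$ volume), so you cannot invoke it here. The underlying idea is sound, though: if you first compute the distance density directly from the geometry\,---\,the pushforward of $\cos\eta\sin\eta\deta\wedge\dtheta_1\wedge\dtheta_2$ under $(\theta_1,\eta)\mapsto\arccos(\cos\theta_1\cos\eta)$, which is exactly the substitution behind~\eqref{eq:integral}\,---\,then $\int_0^{\pi/2}e^{tr}f_n(r)\,\mathrm{d}r$ with $f_n$ as in~\eqref{eq:distancePDF} is an elementary integral in which $4+t^2$ arises from $\int e^{tr}\sin 2r\,\mathrm{d}r$ just as you predict. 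That is essentially the paper's proof in different clothing, and I would recommend it over the resummation.
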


We then use the moment-generating function to determine the cumulative distribution function of distance, which (up to scaling) simply reports volumes of balls. Consequently, our probabilistic approach to studying distances on lens spaces yields the following purely geometric result:

\begin{theorem}\label{thm:ball volume}
	For $n \geq 2$, the volume of a ball of radius $r$ in $L(n;1)$ is
	\[
		V_n(r) = \begin{cases} 2\pi(r-\sin r \cos r) & \text{if } r \leq \frac{\pi}{n} \\ \frac{2\pi^2}{n} - 2\pi \cos^2 r \tan\frac{\pi}{n} & \text{else.}\end{cases}
	\]
\end{theorem}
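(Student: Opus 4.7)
The plan is to exploit homogeneity. Since $L(n;1)$ is a homogeneous Riemannian manifold with total volume $\vol(L(n;1)) = \frac{2\pi^2}{n}$, every ball of radius $r$ has the same volume, and the cumulative distribution function of distance between two independent uniform points satisfies
\[
	F_n(r) = \frac{V_n(r)}{\vol(L(n;1))} = \frac{n\,V_n(r)}{2\pi^2}.
\]
So it suffices to extract $F_n$ from the moment-generating function $M_n(t)$ supplied by Theorem~\ref{thm:mgf}, then rescale.

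My strategy is to invert the MGF to recover the density $f_n = F_n'$ and integrate. The shape of $M_n(t)$ points directly at the answer: the factor $\frac{1}{4+t^2}$ is the characteristic denominator produced by $\int e^{tr}\sin(2r)\,\mathrm{d}r$ and $\int e^{tr}\cos(2r)\,\mathrm{d}r$, while the exponentials $e^{t\pi/n}$ and $e^{t\pi/2}$ signal that $f_n$ is supported on $[0,\pi/2]$ with a change of formula at $r = \pi/n$. I posit the ansatz
\[
	f_n(r) = \begin{cases} \dfrac{2n}{\pi}\sin^2 r, & 0 \le r \le \tfrac{\pi}{n}, \\ \dfrac{n}{\pi}\tan\dfrac{\pi}{n}\sin(2r), & \tfrac{\pi}{n} \le r \le \tfrac{\pi}{2}, \end{cases}
\]
and verify $\int_0^{\pi/2} e^{tr} f_n(r)\,\mathrm{d}r = M_n(t)$ using the elementary antiderivatives of $e^{tr}\sin^2 r$ and $e^{tr}\sin(2r)$. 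The key algebraic simplification during the verification is the identity $\tan(\pi/n)\sin(2\pi/n) = 2\sin^2(\pi/n)$, which also establishes continuity of $f_n$ at the breakpoint $r = \pi/n$; this collapses the coefficient of $e^{t\pi/n}$ to $4 - 2t\tan(\pi/n)$ and makes the two pieces fit together cleanly.

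Once the density is confirmed, direct antidifferentiation yields $F_n(r) = \frac{n}{\pi}(r - \sin r \cos r)$ on $[0,\pi/n]$, and for $r \in [\pi/n, \pi/2]$ a telescoping simplification reduces $F_n(\pi/n) + \frac{n}{\pi}\tan\frac{\pi}{n}(\cos^2(\pi/n) - \cos^2 r)$ to $1 - \frac{n}{\pi}\tan\frac{\pi}{n}\cos^2 r$. Multiplying both expressions by $\frac{2\pi^2}{n}$ produces the two stated cases. The $n=2$ case (where $\pi/n = \pi/2$ and only the first branch applies) is handled identically; one checks that $V_2(\pi/2) = \pi^2 = \vol(L(2;1))$.

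The main obstacle is the inversion step: the pole and exponential structure of $M_n(t)$ essentially dictates the ansatz, but carrying through the explicit integration and algebra to see that no extra terms survive is the only nontrivial bookkeeping. After that, uniqueness of MGFs for bounded-support distributions guarantees $f_n$ is indeed the density, and the passage from CDF to ball volume is an automatic rescaling.
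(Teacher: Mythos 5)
Your proposal is correct and follows essentially the same route as the paper: recover the density $f_n$ from the moment-generating function of Theorem~\ref{thm:mgf} (the paper phrases this as an inverse Laplace transform of $M_n(-t)$, where you guess the piecewise density and verify it by forward integration, which amounts to the same thing plus an explicit check), integrate to get the cumulative distribution function $F_n$, and rescale by $\vol(L(n;1)) = \frac{2\pi^2}{n}$ using homogeneity. Your ansatz for $f_n$, the simplification at the breakpoint $r = \pi/n$, and the resulting $F_n$ all agree with~\eqref{eq:distancePDF} and the paper's computation.
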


Notice, in particular, that this formula for volume extends beyond the injectivity radius $\frac{\pi}{n}$ of $L(n;1)$, in contrast to most results about volumes of balls in Riemannian manifolds (e.g.,~\cite{Gray:1979ih}). In addition to the potential applications of these ideas to data problems, this seems to be a novel result to add to existing knowledge about the geometry and topology of lens spaces~\cite{ALEX, brody, Ikeda:1979wb,Lehoucq:2003js, PY03, reidemeister, SL05, Tanaka:1979ue, Viana:2018ho}.

We describe our perspective, provide basic background material on lens spaces, and give the setting in which algorithms and analytic computations are to be made in~\Cref{LENS}. In \Cref{EXP} we describe algorithms for sampling random points and determining their distance apart. In addition, we present the results of several Monte Carlo experiments that illustrate differences between distributions of distances on homogeneous and non-homogeneous lens spaces. \Cref{THEO} contains the main theoretical results of the paper.

\section{Lens Spaces}\label{LENS}
Three-dimensional lens spaces are a family of manifolds that arise as the orbit space of a finite cyclic group acting freely on the unit 3-sphere. More precisely, let \(Z_n = \{e^{i2\pi k /n} \in \C ~|~ 1 \leq k \leq n\}\) denote the cyclic group of order \(n\) and consider \(\Sph^3 = \{(\alpha,\beta) \in \C^2~|~|\alpha|^2 + |\beta|^2 = 1\} \). Given $n,m \in \mathbb N$ with \(\gcd(m,n)=1\), there is a free action of \(Z_n\) on \(\Sph^3\) defined by 
\[
\omega \cdot (\alpha,\beta) = (\omega \alpha, \omega^m \beta),
\] 
for each $\omega \in Z_n$. The resulting orbit space is the lens space \(L(n;m)\).

To visualize $L(n;m)$, we can look at the fundamental domain of the $Z_n$ action on $\Sph^3 \subseteq \C^2$, as in \Cref{fig:lens space}. The fundamental domain of the rotation $e^{2\pi i/n}$ in the first factor is an arc of length $\frac{2\pi}{n}$ in the unit circle in the $z_1$-plane of $\C^2$. All points in $\Sph^3$ with first coordinate in such a fundamental domain form a lens-shaped domain as pictured. The top and bottom faces of the lens consist of all points lying on geodesics connecting an endpoint of the arc to all points in the unit circle in the $z_2$-plane: these are hemispheres of unit 2-spheres meeting at an angle of $\frac{2\pi}{n}$ along the unit circle in the $z_2$-plane. Since the endpoints of the arc are identified under the $\frac{2\pi}{n}$ rotation in the $z_1$-coordinate, the bottom face is identified with the top face by this rotation. However, this identification happens with a $\frac{2\pi m}{n}$ twist in the $z_2$-coordinate, so that the green sector in the bottom face is glued to the green sector in the top face (in the picture, $m=2$). 


\begin{figure}[t]
	\centering
		\includegraphics[height=2in]{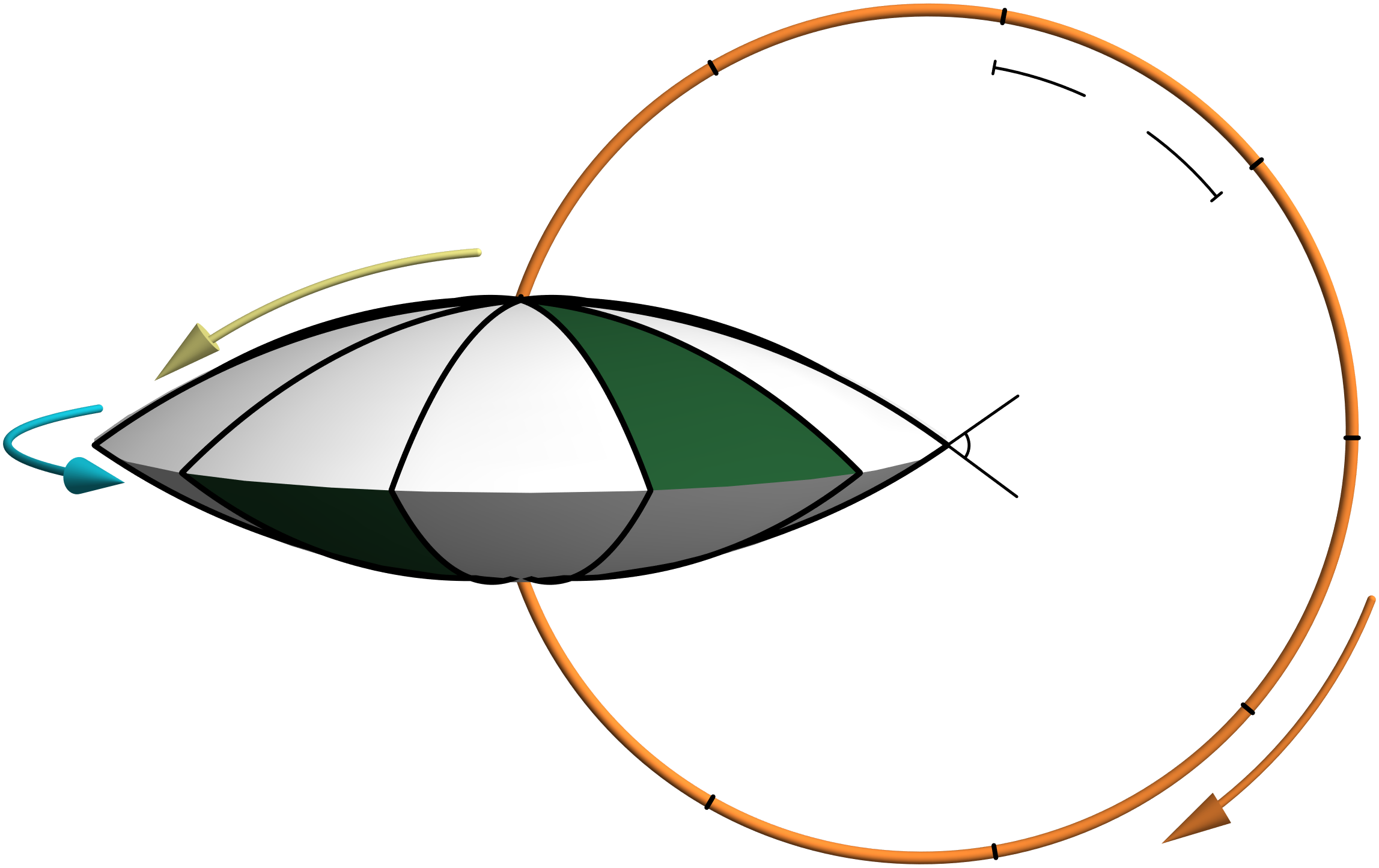}
		\put(-12,12){\textcolor{csuorange}{$\theta_1$}}
		\put(-240,69){\textcolor{csublue}{$\theta_2$}}
		\put(-182,101){\textcolor{csugold}{$\eta$}}
		\put(-49,127){\begin{rotate}{-32}\tiny $\frac{2\pi}{n}$\end{rotate}}
		\put(-66,68.8){\tiny $\frac{2\pi}{n}$}
	\caption{A fundamental domain of the lens space $L(n;m)$. The arrows indicate the directions of the join coordinates $(\theta_1,\theta_2,\eta)$ that will be defined on \cpageref{join coords}.}
	\label{fig:lens space}
\end{figure}

Lens spaces were introduced by Tietze~\cite{Tietze} and have historically provided interesting examples of manifolds which cannot be distinguished by homology or homotopy groups. For example, \(L(5;1)\) and \(L(5;2)\) are not homeomorphic (nor even homotopy equivalent) despite the fact that \(\pi_1(L(5;1)) \cong \pi_1(L(5;2))\) and \(H_\bullet(L(5;1)) \cong H_\bullet(L(5;2))\)~\cite{ALEX}. In fact, the lens spaces \(L(n;m_1)\) and \(L(n;m_2)\) are homotopy equivalent if and only if  \(m_1m_2 = \pm a^2\pmod n\) for some \(a \in \N\), and are homeomorphic if and only if \(m_1 = \pm m_2^{\pm1}\pmod n\)~\cite{brody,reidemeister}. Using these criteria, one can easily conclude that \(L(7;1)\) and \(L(7;2)\) are examples of manifolds which are homotopy equivalent but not homeomorphic.

In addition to their topological structure, lens spaces have geometric structure. The round metric on $\Sph^3$ induces a unique metric on \(\Sph^3/Z_n = L(n;m)\) that makes \(\pi: \Sph^3 \to L(n;m)\) a Riemannian submersion. A result of Ikeda and Yamamoto~\cite{Ikeda:1979wb} implies that two three-dimensional lens spaces are isometric if and only if they are homeomorphic. This result, combined with work of Tanaka~\cite{Tanaka:1979ue}, shows that the spectrum of the Laplacian uniquely determines a three-dimensional lens space among all Riemannian manifolds. An explicit orthonormal eigenbasis for the Laplacian is given in~\cite{Lehoucq:2003js}. Moreover, the isoperimetric problem has been solved in all lens spaces $L(n;m)$ with $n$ large enough~\cite{Viana:2018ho}.

With respect to this Riemannian metric, some lens spaces are homogeneous, meaning the isometry group acts transitively. Theorem 7.6.6 of Wolf~\cite{Wolf:2011tw} says that \(\Sph^d/G\) is homogeneous if and only if the group \(G\) has a \emph{Clifford representation}\,---\,that is, a faithful orthogonal representation \(\rho:G \to O(d+1)\) such that \(\rho(g) = \pm I\) or half of the eigenvalues of \(\rho(g)\) are \(\lambda \in \Sph^1\) and the other half are \(\bar{\lambda}\). The action of \(Z_n\) on \(\Sph^3\) has the faithful orthogonal representation \(\rho:Z_n \to O(4)\) given by 
\[
\rho(\omega) = 
\begin{pmatrix}
\cos 2\pi/n& -\sin 2\pi/n& 0&0\\
\sin 2\pi/n & \cos 2\pi/n &0&0\\
0&0&\cos 2\pi m/n& -\sin 2\pi m/n\\
0&0&\sin 2\pi m/n & \cos 2\pi m/n\\
\end{pmatrix},
\]
and has eigenvalues \(e^{i2\pi / n}, e^{-i2\pi / n}, e^{i2\pi m/ n}\) and \(e^{-i2\pi m/ n}\). Hence \(L(n;m)\) is homogeneous precisely when \(m = 1\) or \(m = n-1\). Since \(L(n;1)\) and \(L(n;n-1)\) are homeomorphic and hence isometric, we may simply take \(m=1\) when dealing with homogeneous lens spaces.

\subsection{Coordinate systems}
Using the natural group structure on \(\Sph^3\) given by its identification with the unit quaternions, we can describe an isomorphism between \(\Sph^3\) and \(SU(2)\). Writing quaternions in the form $\alpha + \beta \J$ for $\alpha, \beta \in \C$, define \(\varphi: \Sph^3 \to SU(2)\) by
\[
\varphi : \alpha + \beta \J \mapsto 
\begin{pmatrix}
\alpha & -\beta\\
\overline{\beta} & \overline{\alpha}
\end{pmatrix},
\]
where \(\overline{\zeta}\) denotes the complex conjugate of \(\zeta\). It is easy to check that \(\varphi\) is a Lie group isomorphism. The action of \(Z_n\) on \(\Sph^3\) then induces an action on \(SU(2)\) given explicitly by
\begin{align} \label{act}
\omega \cdot \begin{pmatrix}
\alpha & -\beta\\
\overline{\beta} & \overline{\alpha}
\end{pmatrix}
= 
\begin{pmatrix}
\omega \alpha & -\omega^m\beta\\
\overline{\omega^m\beta} & \overline{\omega \alpha}
\end{pmatrix}.
\end{align}
Describing the lens space in this way will make our computations straightforward. The idea is that we can easily generate random elements of \(SU(2)\) according to Haar measure (which corresponds to the uniform probability measure on $\Sph^3$), compute the orbits explicitly, and then distances between orbits correspond to distances in the lens space.

\label{join coords} For homogeneous lens spaces, we will be able to make explicit analytic calculations in \Cref{THEO}. To do so, we'll parametrize \(\Sph^3\) using \emph{join coordinates}, which realize the 3-sphere as the join of two circles. Since \(\Sph^3 = \{(\alpha,\beta) \in \C^2 \ \arrowvert \ |\alpha|^2 + |\beta|^2 = 1\}\), we can write \(\alpha = e^{i \theta_1} \cos\eta\) and \(\beta = e^{i \theta_2} \sin\eta\) for \(\theta_1,\theta_2 \in [-\pi,\pi)\) and \(\eta \in [0,\pi/2]\). This can also be expressed in Cartesian coordinates on \(\R^4\) as
\begin{align}
x &= \cos \theta_1 \cos \eta \nonumber \\
y &= \sin \theta_1 \cos \eta \label{eq:join coords}\\
z &= \cos \theta_2 \sin \eta \nonumber \\
w &= \sin \theta_2 \sin \eta. \nonumber
\end{align}

These coordinates easily yield the volume form \(\dVol_{\Sph^3} = \cos \eta \sin \eta \deta \wedge \dtheta_1 \wedge \dtheta_2\), and the volume form induced by the Riemannian submersion metric on the homogeneous lens space \(L(n;1)\) is \({\dVol_{L(n;1)}  = \cos \eta \sin \eta \deta \wedge \dtheta_1 \wedge \dtheta_2}\), where now \(\theta_1, \theta_2 \in [-\pi/n,\pi/n)\). A straightforward calculation shows that \(\vol(L(n;1)) = 2\pi^2/n^2\).

\section{Algorithms and Experiments} \label{EXP}
In this section we'll provide an algorithm for a Monte Carlo experiment. We then use this as a guide for analysis on higher moments. 

Our aim is to describe a Monte Carlo simulation which will allow us to approximate expected (Riemannian) distances between two points in \(L(n;m)\). We will use \Cref{alg:RandSU} to randomly generate elements of \(SU(n)\).
\begin{algorithm}
  \caption{Random Special Unitary Matrix}\label{alg:RandSU}
  \begin{algorithmic}[1]
    \Function{RandSU}{$n$}
    \State \(A,B \gets \text{random $n\times n$ Gaussian}\)
    \State \(C \gets A+iB\) \Comment{where \(i = \sqrt{-1}\)}
    \State \(Q \gets \textproc{GramSchmidt}{(C)}\)
    \State \(Q_{1,n} \gets \frac{1}{\det(Q)}Q_{1,n}\) \Comment{\(Q_{1,n}\) is the last column of \(Q\)}
    \EndFunction
  \end{algorithmic}
\end{algorithm}
\hfill


We will use the Riemannian distance function on \(SU(2)\) (see \cite{grass}), then use the Riemannian submersion \(\pi: SU(2) \to L(n;m)\) to obtain a distance function on the lens space. Suppose that \(A,B \in SU(2)\), and let \(\lambda_1,\lambda_2\) be the eigenvalues of \(AB^*\).  For a nonzero complex number $z = x + yi$,  we let $\log z$ denote the principal value logarithm whose imaginary part lies in the interval $(-\pi,\pi]$. We have
\[
d(A,B) = \frac{1}{\sqrt{2}}\sqrt{|\log \lambda_1|^2 + |\log \lambda_2|^2}
\]
or, since $\lambda_2 = \overline{\lambda}_1$, $d(A,B) = |\log \lambda_1|$. To compute distances on \(L(n;m)\), we first compute the orbits, then compute pairwise distances between the elements of each orbit, and finally take the minimum of all distances computed. Thus for \([A], [B] \in L(n;m)\), we have
\[
d([A],[B]) = \min_{1\leq j,k \leq n} \{ d(\omega^j\cdot A, \omega^k \cdot B)\},
\]
where $\omega = e^{2\pi i/n}$. This leads to \Cref{alg:distgen}.
\begin{algorithm}
\caption{Expected Distance on $L(n;m)$}\label{alg:distgen}
\begin{algorithmic}[1]
\State \(D \gets [0]*N\) \Comment{Begin with a list of \(N\) zeroes}
\For{$k\gets 1 , N$}
   \State \(A \gets \textproc{RandSU}(2)\)
   \State \(B \gets \textproc{RandSU}(2)\)
   \State \(\text{orbitdata} \gets [0]*n \times n\) \Comment{Initialize $n\times n$ zero matrix}
   \For{$ i \gets 1, n$}
   	\For{$ j \gets 1,n$}
   	\State \( \text{orbitdata}(i,j) \gets d(\omega^i \cdot A,\omega^j \cdot B)\) 
	\EndFor
   \EndFor
   \State \(D(k) \gets \textproc{min}(\text{orbitdata})\)
\EndFor
\end{algorithmic}
\Return \(\textproc{mean}(D)\)
\end{algorithm}

For example, using \Cref{alg:distgen} with \(N=1,\!000,\!000\), we estimate the expected distances between random points on \(L(5;1)\) and \(L(5;2)\) to be approximately \(0.85897\) and \(0.80378\), respectively, reflecting the fact that these lens spaces are not isometric (nor even homeomorphic); see \Cref{fig:L51and52}. The corresponding estimates for \(L(7;1)\) and \(L(7;2)\) are \(0.82641\) and \(0.73641\), respectively, again reflecting the fact that these spaces are neither isometric nor homeomorphic, though they are homotopy equivalent.

\begin{figure}[t]
	\centering
		\includegraphics[height=1.5in]{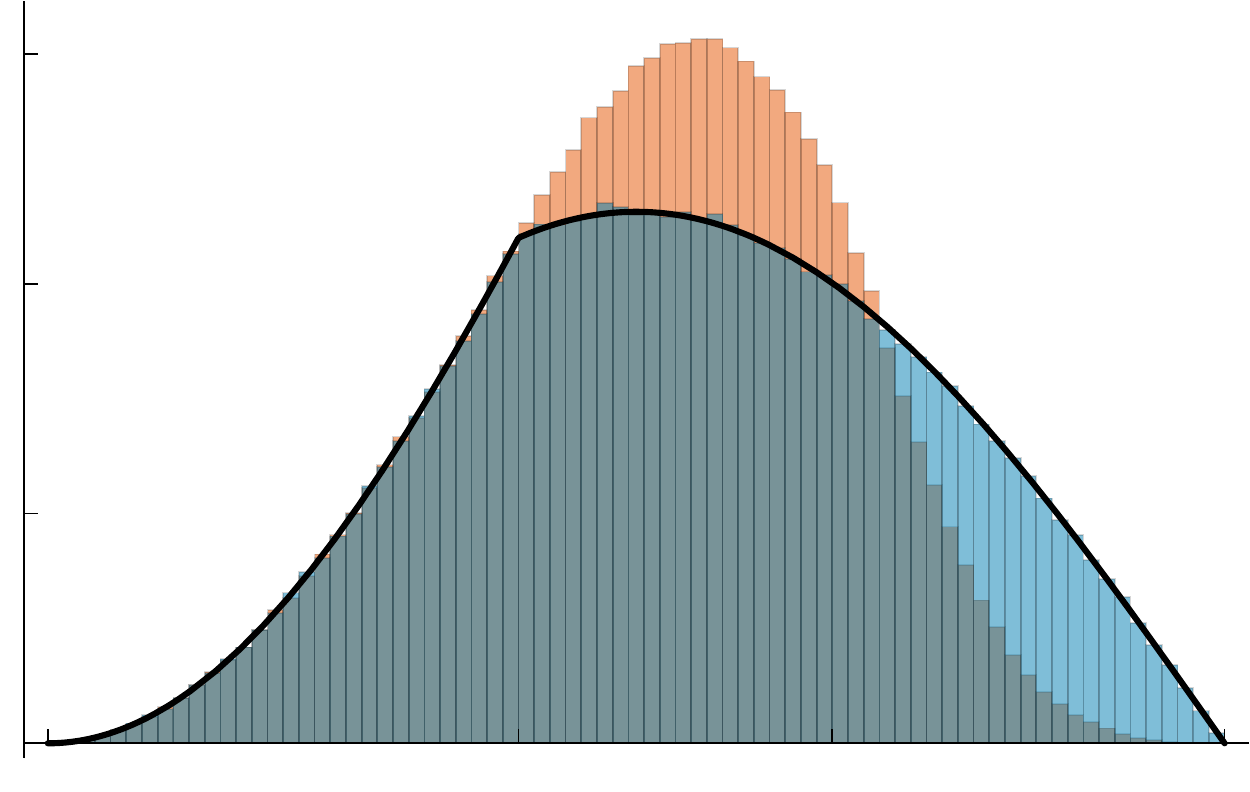}
		\hspace{1in}
		\includegraphics[height=1.5in]{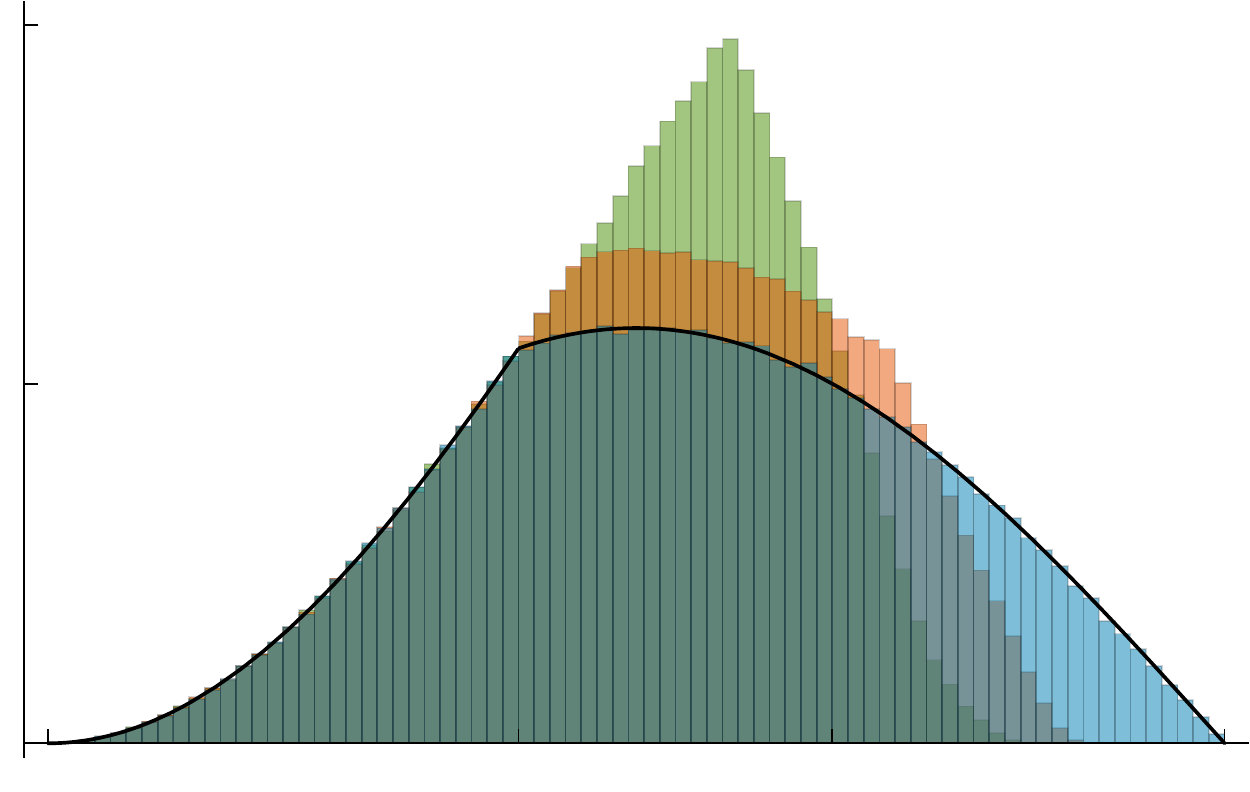}
		\put(-430,35.05){$\frac{1}{2}$}
		\put(-430,66.56){$1$}
		\put(-430,98.04){$\frac{3}{2}$}
		\put(-178,52.8){$1$}
		\put(-178,101.95){$2$}
		\put(-104,-5){$\frac{\pi}{5}$}
		\put(-7,-5){$\frac{\pi}{2}$}
		\put(-355,-5){$\frac{\pi}{5}$}
		\put(-258,-5){$\frac{\pi}{2}$}
	\caption[Histograms of distances in $L(5;1)$ and $L(5;2)$]{On the left are histograms of distances between 1,000,000 random pairs of points in $L(5;1)$ [blue] and $L(5;2)$ [red], computed using \Cref{alg:distgen}; the curve shows the true density of distances in $L(5;1)$ from~\eqref{eq:distancePDF}. The right shows histograms of distances in $L(5;2)$ from 1,000,000 random points to different fixed points, where the fixed points are the images in $L(5;2)$ of $SU(2)$ elements of the form $\begin{pmatrix} \cos \phi & -\sin \phi \\ \sin \phi & \cos \phi \end{pmatrix}$, where $\phi = 0$ [blue], $\pi/8$ [red], and $\pi/4$ [green]; the curve shows the density of distances from random points in $L(5;1)$ to any fixed point, again from~\eqref{eq:distancePDF}. In particular, whereas the distributions of distances from random points to any fixed point in the homogeneous space $L(5;1)$ are all the same, these distributions vary with the fixed point in the non-homogeneous $L(5;2)$.}
	\label{fig:L51and52}
\end{figure}

In the homogeneous case, it actually suffices to fix a representative of a fixed orbit, then check the distances between the chosen representative and each element of the other orbit. To see this, note that 
\begin{align*}
 d([A],[B]) &= \min_{1\leq i,j \leq n} \{ d(\omega^i\cdot A, \omega^j \cdot B)\\
 &=  \min_{1\leq k \leq n} \{ d(\omega^k \cdot AB^*, I).
\end{align*}
If $A, B$ are chosen according to Haar measure on $SU(2)$, then $AB^*$ will also be distributed according to Haar measure, which is definitionally invariant under the (left or right) action of $SU(2)$ on itself. Hence, when doing a computational experiment, we can generate one random element of \(SU(2)\), compute the orbit under the action, and then compute the distances from each element in the orbit to the identity. This yields the less computationally expensive \Cref{alg:dist}.

\begin{algorithm}
\caption{Expected Distance on $L(n;1)$}\label{alg:dist}
\begin{algorithmic}[1]
\State \(D \gets [0]*N\) 
\For{$l\gets 1 , N$}
   \State \(A \gets \textproc{RandSU}(2)\)
   \State \(\text{orbitdata} \gets [0]*n\) 
   \For{$ k \gets 1, n$}
   \State \( \text{orbitdata}(k) \gets d(\omega^k \cdot A,I)\) 
   \EndFor
   \State \(D(l) \gets \textproc{min}(\text{orbitdata})\)
\EndFor
\end{algorithmic}
\Return \(\textproc{mean}(D)\)
\end{algorithm}

For \(N=1,\!000,\!000\), a na\"ive Matlab implementation of \Cref{alg:distgen} gives the estimate \({E[d; L(5,1)] \approx 0.85897}\) in about 940 seconds on a laptop, whereas \Cref{alg:dist} yields \(E[d; L(5,1)] \approx  0.85921\) in about 86 seconds.

\section{Distributions of Distances}\label{THEO}
We now restrict to the case that \(L(n;m)\) is homogeneous; as previously mentioned, we can (and will) assume in what follows that \(m=1\). In this section we derive an analytic description of the distributions of distances on all the $L(n;1)$ lens spaces.

As a first step to understanding these distributions of distances, we will compute the \(k\)th moment of distance between 2 random points in \(L(n;1)\). We now work in join coordinates~\eqref{eq:join coords}, and we think of points in $L(n;1)$ as orbits of points in $\Sph^3$. Since $L(n;1)$ is homogeneous, we may fix one point  to be (the orbit of) the point $q = (1,0,0,0)$. The fundamental domain of the $Z_n$ action centered at this point (depicted in \Cref{fig:lens space}) is determined by the join coordinate inequalities
\[
	-\frac{\pi}{n} \leq \theta_1, \theta_2 < \frac{\pi}{n},
\]
so computing the expectation of $k$th power of distance in $L(n;1)$ is equivalent to computing the expectation of $[d_{\Sph^3}(p,q)]^k$, where $p$ varies over this fundamental domain. With $p$ written in join coordinates, $d_{\Sph^3}(p,q) = \arccos(p \cdot q) = \arccos(\cos \theta_1 \cos \eta)$, so the $k$th moment of distance is exactly
\begin{align*}
I_{n,k} := \mathbb{E}[d^k; L(n;1)] &= \frac{1}{\vol(L(n;1))}\int_{L(n;1)}\!\!\!\!\!\! [d_{L(n;1)}([p],[q])]^k ~\dVol_{L(n;1)}\\
&= \frac{n^2}{2\pi^2}\int_{-\pi/n}^{\pi/n} \int_{-\pi/n}^{\pi/n} \int_0^{\pi/2} \!\!\!\!\!\!\arccos^k(\cos \theta_1 \cos \eta)  \cos \eta \sin \eta \deta \dtheta_1 \dtheta_2.
\end{align*}

Obviously,
\begin{equation}\label{eq:zeroth moment}
	I_{n,0} = 1.
\end{equation}

For \(k \geq 1\), the integral expression for \(I_{n,k}\) can be simplified somewhat by integrating out \(\theta_2\), observing that the integrand is even in \(\theta_1\), and making the substitution \(\cos u = \cos \theta_1 \cos \eta\). Doing so produces the integral
\begin{equation}\label{eq:integral}
	I_{n,k} = \frac{2n}{\pi}\int_0^{\pi/n} \!\!\!\!\!\!\sec^2 \theta_1 \int_{\theta_1}^{\pi/2} \!\!\! u^k \cos u \sin u \du \dtheta_1.
\end{equation}

Notice that this integral is improper for $n=2$. For $n\geq 3$ we can apply the reduction formula~\cite[2.631.1]{GR} for the inner integral to compute the first moment
\begin{equation}\label{eq:first moment}
	I_{n,1}  = \frac{\pi}{2n} + \frac{n-2}{4}\tan\frac{\pi}{n}
\end{equation}
and the relation
\begin{equation}\label{eq:recurrence}
	I_{n,k} =- \frac{k(k-1)}{4} I_{n,k-2} + \frac{1}{k+1} \left(\frac{\pi}{n}\right)^k + \frac{n}{2\pi} \left[ \left(\frac{\pi}{2}\right)^k\!\!- \left(\frac{\pi}{n}\right)^k \right] \tan \frac{\pi}{n}
\end{equation}
for \(k \geq 2\).

We can solve this recurrence using standard methods. The following theorem expresses the solution in terms of {\it generalized hypergeometric functions} $\pFq{p}{q}{a_1, a_2, \dots , a_p}{b_1, b_2, \dots , b_q}{z}$. In the definition of this class of functions, it  is convenient  to  introduce the  {\it Pochhammer symbol} $(a)_n$,  defined by the rule 
\[
		(a)_n = \begin{cases} 1 & \text{if } n=0 \\ a(a+1) \cdots (a+n-1) & \text{if } n\geq 1. \end{cases}
	\]
Equivalently, so long as $a$ is not a nonpositive integer $(a)_n = \frac{\Gamma(a+n)}{\Gamma(a)}$.
	
In terms of  the  Pochhammer symbol, the generalized hypergeometric function is defined by the series \label{hypergeometeric def}  $$\pFq{p}{q}{a_1, a_2, \dots , a_p}{b_1, b_2, \dots , b_q}{z} = \sum_{n=0}^{\infty}\frac{(a_1)_n (a_2)_n \cdots (a_p)_n}{(b_1)_n (b_2)_n \cdots (b_q)_n} \frac{z^n}{n!},$$ provided none of the $b_1, \dots , b_q$ is a nonpositive integer. When $p \leq q$, the series converges for all $z$ and ${}_{p}\nobreak\hspace{-.05em}F_{q}$ is entire.

\begin{moments}
	For each $k \geq 0$ and each $n \geq 3$, the $k$th moment of distance on $L(n;1)$ is
	\begin{multline}\label{eq:moments} 
		I_{n,k} = \frac{1}{(k+1)(k+2)}\!\left[ \frac{4}{k+3}\! \left(\frac{\pi}{n}\right)^{\!\!k+2}\!\!\! \pFq{1}{2}{1}{\frac{k+4}{2}, \frac{k+5}{2}}{-\frac{\pi^2}{n^2}} \right. \\
		\left. + \tan\frac{\pi}{n} \left(\! n\! \left(\frac{\pi}{2}\right)^{\!\!k+1} \!\!\!\pFq{1}{2}{1}{\frac{k+3}{2},\frac{k+4}{2}}{-\frac{\pi^2}{4}} - 2\!\left(\frac{\pi}{n}\right)^{\!\!k+1} \!\!\!\pFq{1}{2}{1}{\frac{k+3}{2},\frac{k+4}{2}}{-\frac{\pi^2}{n^2}}\right)\!\right]. 
	\end{multline}
	Values for small $k$ are given in \Cref{table:moments}.
\end{moments}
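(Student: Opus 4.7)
The plan is to solve the recurrence~\eqref{eq:recurrence} subject to the initial conditions~\eqref{eq:zeroth moment} and~\eqref{eq:first moment}. A direct substitution shows that the recurrence formula, evaluated at $k=0,1$ under the convention $I_{n,-2}=I_{n,-1}=0$, already reproduces~\eqref{eq:zeroth moment} and~\eqref{eq:first moment}, so we may treat the recurrence as holding for all $k \geq 0$. The approach is to package the recurrence as an exponential generating function, extract a finite alternating sum, and then recognize that sum as the hypergeometric expression in~\eqref{eq:moments}.

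First, set $M(z) = \sum_{k \geq 0} I_{n,k} z^k/k!$ and let $G(z) = \sum_{k \geq 0} g(k) z^k/k!$, where $g(k) = \frac{1}{k+1}(\pi/n)^k + \frac{n}{2\pi}\tan(\pi/n)[(\pi/2)^k - (\pi/n)^k]$ is the forcing term in~\eqref{eq:recurrence}. Multiplying the recurrence by $z^k/k!$ and summing collapses the linear combination into $(1 + z^2/4)M(z) = G(z)$; substituting the explicit $G(z)$, a linear combination of $e^{\pi z /n}$, $e^{\pi z/2}$, and $(e^{\pi z/n}-1)/z$, also proves Theorem~\ref{thm:mgf}. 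Expanding $1/(1+z^2/4) = \sum_{j \geq 0}(-z^2/4)^j$ and taking $k![z^k]$ produces the finite alternating sum
\[
	I_{n,k} = \sum_{j=0}^{\lfloor k/2\rfloor} (-1)^j \frac{k!}{4^j(k-2j)!}\, g(k-2j),
\]
which is the formula~\eqref{eq:first order solution}.

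Next I would split this sum into three pieces according to the three terms of $g(k-2j)$ and reindex each by $l = k-2j$. The pieces built from $(\pi/2)^l$ and $(\pi/n)^l$ become partial sums of the Taylor series for $\cos(2a)$ (when $k$ is even) or $\sin(2a)$ (when $k$ is odd), evaluated at $a = \pi/2$ and $a = \pi/n$; the piece built from $(\pi/n)^l/(l+1)$ becomes a partial sum of $(1-\cos(2\pi/n))/(2\pi/n)$ or $\sin(2\pi/n)/(2\pi/n)$. For each piece I would write \emph{truncated sum} $=$ \emph{full series} $-$ \emph{tail}, reindex the tail starting from the first omitted term, and apply the Pochhammer identities $(2m+2+2j)!/(2m+2)! = 4^j(m+3/2)_j(m+2)_j$ and $(2m+3+2j)!/(2m+3)! = 4^j(m+2)_j(m+5/2)_j$ (and their shifts) to rewrite each tail as a $\pFq{1}{2}{1}{\cdot,\cdot}{\cdot}$ series. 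These are precisely the three hypergeometric functions appearing in~\eqref{eq:moments}, with the expected parameters $((k+4)/2,(k+5)/2)$ or $((k+3)/2,(k+4)/2)$.

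The main obstacle is checking that the elementary ``full series'' leftovers cancel. Collecting them yields a combination of $\sin(2\pi/n)$ and $\tan(\pi/n)[1+\cos(2\pi/n)]$ when $k$ is even, and of $1-\cos(2\pi/n)$ and $\tan(\pi/n)\sin(2\pi/n)$ when $k$ is odd; the identities $1+\cos(2\pi/n) = 2\cos^2(\pi/n)$, $1-\cos(2\pi/n) = 2\sin^2(\pi/n)$, and $\sin(2\pi/n) = 2\sin(\pi/n)\cos(\pi/n)$ make the cancellations exact, independent of parity. After these elementary pieces cancel, factoring out $1/((k+1)(k+2))$ and collecting powers of $\pi/n$ and $\pi/2$ reproduces~\eqref{eq:moments}.
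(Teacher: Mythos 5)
Your proposal is correct, and its second half\,---\,writing each truncated trigonometric series as ``full series minus tail'' and recognizing the reindexed tails as ${}_{1}F_{2}$'s via the Pochhammer identities\,---\,is exactly the paper's argument; I checked that your claimed cancellations of the elementary leftovers (e.g.\ $\frac{n}{2\pi}\sin\frac{2\pi}{n}$ against $\frac{n}{2\pi}\tan\frac{\pi}{n}\,(1+\cos\frac{2\pi}{n})$ in the even case) do hold, and that your finite alternating sum agrees with~\eqref{eq:first order solution} after the reindexing $l=k-2j$. Where you genuinely diverge is in how you reach that finite sum. The paper observes that the even and odd moments decouple, recasts~\eqref{eq:recurrence} as two first-order linear difference equations $y_{m+1}=g_m y_m+h_m$, and solves each by the standard product formula; the odd case requires the slightly delicate device of assigning the fictitious initial condition $z_0=1$ (which is \emph{not} $I_{n,-1}$) so that the odd system matches the even one. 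Your exponential generating function treats both parities at once: the key point making this work is your (correct) observation that the coefficient $-k(k-1)/4$ vanishes at $k=0,1$, so the recurrence with forcing term $g(k)$ already encodes the initial data~\eqref{eq:zeroth moment} and~\eqref{eq:first moment} and the relation $(1+z^2/4)M(z)=G(z)$ holds cleanly. This buys you two things: you avoid the $z_0$ trick entirely, and you get Theorem~\ref{thm:mgf} as an immediate byproduct rather than by the separate integration-by-parts computation the paper performs (the formal manipulation is justified since $0\le I_{n,k}\le(\pi/2)^k$ guarantees convergence of the EGF). The trade-off is that the paper's route produces~\eqref{eq:first order solution} in the exact normalization displayed in \Cref{table:moments}, whereas yours requires the extra reindexing step to match it; both are routine.
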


\begin{table}
	\begin{center}
		\def\arraystretch{1.2}
		\begin{tabular}{ll}
			$k$ & $I_{n,k}$ \\
			\midrule
	 		$0$ & $1$ \\ 
	 		$1$ & $\frac{\pi }{2 n}+\left(\frac{1}{4}n -\frac{1}{2}\right)\! \tan \frac{\pi }{n}$ \\ 
	 		$2$ & $-\frac{1}{2}+\frac{\pi ^2}{3 n^2}+\left(\frac{\pi  }{8}n-\frac{\pi }{2 n}\right)\!\tan\frac{\pi }{n}$ \\ 
	 		$3$ & $-\frac{3 \pi }{4 n}+\frac{\pi ^3}{4 n^3}+\left(\!\frac{\pi ^2 -6}{16}n+\frac{3}{4}-\frac{\pi ^2}{2 n^2}\!\right)\!\tan\frac{\pi }{n}$ \\ 
	 	   	$4$ & $\frac{3}{2}-\frac{\pi ^2}{n^2}+\frac{\pi ^4}{5 n^4}+\left(\!\frac{\pi ^3 -12\pi}{32}n+\frac{3 \pi }{2 n}-\frac{\pi ^3}{2 n^3}\!\right)\!\tan\frac{\pi }{n}$ \\ 
	 	   	$5$ & $\frac{15 \pi }{4 n}-\frac{5 \pi ^3}{4 n^3}+\frac{\pi ^5}{6 n^5}+\left(\!\frac{\pi ^4 -20\pi^2+120}{64}n-\frac{15}{4}+\frac{5 \pi ^2}{2 n^2}-\frac{\pi ^4}{2 n^4}\!\right)\!\tan\frac{\pi }{n}$ \\ 
	 	   	$6$ & $-\frac{45}{4}+\frac{15 \pi ^2}{2 n^2}-\frac{3 \pi ^4}{2 n^4}+\frac{\pi ^6}{7 n^6}+\left(\!\frac{\pi ^5 - 30 \pi^3 + 360\pi }{128}n-\frac{45 \pi }{4 n}+\frac{15 \pi ^3}{4 n^3}-\frac{\pi ^5}{2 n^5}\!\right)\! \tan \frac{\pi }{n}$ \\
	 	   	$7$ & $-\frac{315 \pi }{8 n}+\frac{105 \pi ^3}{8 n^3}-\frac{7 \pi ^5}{4 n^5}+\frac{\pi ^7}{8 n^7}+\left(\!\frac{\pi ^6-42 \pi ^4+840 \pi ^2-5040}{256}n+\frac{315}{8}-\frac{105 \pi ^2}{4 n^2}+\frac{21 \pi ^4}{4 n^4}-\frac{\pi ^6}{2 n^6}\!\right)\! \tan \frac{\pi }{n}$
		\end{tabular}
	\end{center}
	\caption{Values of the $k$th moment of distance $I_{n,k}$ for small $k$ and $n \geq 3$.}
	\label{table:moments}
\end{table}

\begin{proof}
	While the difference equation~\eqref{eq:recurrence} is second-order, the even and odd $I_{n,k}$ are independent of each other, so we can separately reduce each to a first-order difference equation and then solve that first-order equation. 
	
	For example, if $k=2m$ is even, then defining $y_m := I_{n,2m}$ and index-shifting allows us to re-write~\eqref{eq:recurrence} as
	\[
		y_{m+1} =- \frac{(2m+2)(2m+1)}{4} y_m + \frac{1}{2m+3} \left(\frac{\pi}{n}\right)^{\!\!2m+2}\!\! + \frac{n}{2\pi} \left[ \left(\frac{\pi}{2}\right)^{\!\!2m+2} \!\!\!- \left(\frac{\pi}{n}\right)^{\!\!2m+2}\right]\! \tan \frac{\pi}{n}
	\]
	with initial condition $y_0 = I_{n,0} =1$ from~\eqref{eq:zeroth moment}.
	
	This is in the standard form $y_{m+1} = g_m y_m + h_m$ for general first-order linear difference equations, and hence has solution
	\begin{align}\label{eq:first order solution}
		y_m & = \prod_{j=0}^{m-1}g_j \left(y_0 + \sum_{j=0}^{m-1} \frac{h_j}{\prod_{\ell=0}^j g_\ell} \right)  \\
		& = (-1)^m \frac{(2m)!}{2^{2m}}\! \left(\!1 + \sum_{j=0}^{m-1} \frac{(-1)^{j+1}}{(2j+3)!}\!\left(\!\frac{2\pi}{n}\!\right)^{\!\!2j+2} \!\!\!+ \frac{n}{2\pi} \tan \frac{\pi}{n} \sum_{j=0}^{m-1} \frac{(-1)^{j+1}}{(2j+2)!} \!\left(\!\! \pi^{2j+2} - \left(\!\frac{2\pi}{n}\!\right)^{\!\!2j+2}\right) \!\!\right) \nonumber
	\end{align}
	after some simplification. 
	
	In turn, each of the finite sums becomes one of the hypergeometric functions in~\eqref{eq:moments}. For example,
	\begin{align*}
		1 + \sum_{j=0}^{m-1} \frac{(-1)^{j+1}}{(2j+3)!}\!\left(\!\frac{2\pi}{n}\!\right)^{\!\!2j+2} & = \frac{n}{2\pi}\sum_{j=0}^m \frac{(-1)^{j}}{(2j+1)!}\!\left(\!\frac{2\pi}{n}\!\right)^{\!\!2j+1} \\
		& = \frac{n}{2\pi} \sum_{j=0}^\infty \frac{(-1)^{j}}{(2j+1)!}\!\left(\!\frac{2\pi}{n}\!\right)^{\!\!2j+1} \!\!\!\!- \frac{n}{2\pi} \sum_{j=m+1}^\infty \frac{(-1)^{j}}{(2j+1)!}\!\left(\!\frac{2\pi}{n}\!\right)^{\!\!2j+1} \\
		& = \frac{n}{2\pi} \sin \frac{2\pi}{n} - \frac{(-1)^{m+1}}{(2m+3)!}\!\left(\!\frac{2\pi}{n}\!\right)^{\!\!2m+2} \sum_{i=0}^\infty \frac{1}{(m+2)_i(m+\frac{5}{2})_i}\!\left(\!\!-\frac{\pi^2}{n^2}\right)^{\!\!i}.
	\end{align*}
	After multiplying each term by $1 = \frac{i!}{i!} = \frac{(1)_i}{i!}$, the remaining sum is the standard power series representation of $\pFq{1}{2}{1}{m+2, m+\frac{5}{2}}{-\frac{\pi^2}{n^2}}$.
	
	Simplifying the remaining terms in~\eqref{eq:first order solution} and replacing $2m$ with $k$ yields the solution~\eqref{eq:moments} for the even moments.
	
	On the other hand, notice that we can solve the difference equation~\eqref{eq:recurrence} for $I_{n,1}$ independent of the value of $I_{n,-1}$. Therefore, if we define $z_m = I_{n,2m-1}$ for $m \geq 1$, we can choose the initial condition $z_0$ arbitrarily. If we choose $z_0 = 1$,\footnote{We emphasize that $I_{n,-1} \neq 1$; in fact, it is not too hard to show that
	\[
		I_{n,-1} = \frac{n}{\pi} \left[\gamma - \operatorname{Ci}\!\left(\!\frac{2\pi}{n}\!\right) + \log\!\left(\!\frac{2\pi}{n}\!\right) + \left(\!\operatorname{Si}(\pi) - \operatorname{Si}\!\left(\!\frac{2\pi}{n}\!\right)\!\!\right)\tan \frac{\pi}{n} \right],
	\] where $\gamma \approx 0.577$ is the Euler--Mascheroni constant and $\operatorname{Ci}$ and $\operatorname{Si}$ are the cosine integral and sine integral functions, respectively.} then the difference equation and initial condition for $z_m$ are essentially identical to those in the problem we just solved. Indeed, solving the system and plugging in $k=2m-1$ at the end yields the exact same expression~\eqref{eq:moments} for the odd moments, completing the proof.
\end{proof}

We can't plug $n=2$ into the expressions~\eqref{eq:first moment} and~\eqref{eq:recurrence}, but taking the limit as $n \to 2$ gives the corresponding values of the improper integral~\eqref{eq:integral}:
\begin{align*}
	I_{2,1} & = \frac{1}{\pi} + \frac{\pi}{4} \\
	I_{2,k} & = - \frac{k(k-1)}{4} I_{2,k-2} + \frac{1}{k+1} \!\left(\frac{\pi}{2}\right)^{\!\!k} \!+ \frac{k}{\pi}\!\left(\frac{\pi}{2}\right)^{\!\!k-1}.
\end{align*}

The solution of this initial value problem (together with $I_{2,0}=1$) is simply the limit of~\eqref{eq:moments} as $n \to 2$:

\begin{cor}\label{cor:n=2 moments}
	The $k$th moment of distance on $L(2;1) = \mathbb{RP}^3$ is
	\begin{multline*}
		I_{2,k} = \frac{1}{k+1}\!\left(\frac{\pi}{2}\right)^{\!\!k}\! \left[ 2\, \pFq{1}{2}{1}{\frac{k+3}{2},\frac{k+4}{2}}{-\frac{\pi^2}{4}} \right. \\
		\left. +\frac{\pi^2}{(k+2)(k+3)}\!\left(\! \pFq{1}{2}{1}{\frac{k+4}{2},\frac{k+5}{2}}{-\frac{\pi^2}{4}} - \frac{4}{k+4}\pFq{1}{2}{2}{\frac{k+5}{2},\frac{k+6}{2}}{-\frac{\pi^2}{4}}\right)\!\right].
	\end{multline*}
\end{cor}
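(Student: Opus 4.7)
The plan is to obtain this formula by taking the limit $n \to 2$ in the expression~\eqref{eq:moments} from Theorem~\ref{thm:moments}, as indicated in the paragraph just above the corollary. The first summand of~\eqref{eq:moments}, namely $\frac{4}{k+3}(\pi/n)^{k+2}\, \pFq{1}{2}{1}{\frac{k+4}{2},\frac{k+5}{2}}{-\pi^2/n^2}$, is continuous at $n=2$ and simply evaluates to the $\pFq{1}{2}{1}{\frac{k+4}{2},\frac{k+5}{2}}{-\pi^2/4}$ term in the stated corollary. The second summand is more delicate: it is a $0\cdot\infty$ indeterminate form, since $\tan(\pi/n) \to \infty$ as $n\to 2$, while the bracketed factor vanishes at $n=2$ because the two ${}_1F_2$'s become equal (both have argument $-\pi^2/4$) and the outside coefficients $n(\pi/2)^{k+1}$ and $2(\pi/n)^{k+1}$ coincide.

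To resolve the indeterminacy I would Taylor expand both factors to first order in $n-2$. The expansion $\pi/n = \pi/2 - \pi(n-2)/4 + O((n-2)^2)$ gives $\tan(\pi/n) = 4/(\pi(n-2)) + O(n-2)$. Calling the bracketed factor $h(n)$ and noting $h(2)=0$, the limit of $\tan(\pi/n)\,h(n)$ collapses to $\tfrac{4}{\pi}\, h'(2)$. Computing $h'$ requires differentiating the hypergeometric factor with respect to its argument $-\pi^2/n^2$, for which I would use the identity
\[
\frac{d}{dz}\pFq{1}{2}{1}{a,b}{z} = \frac{1}{ab}\,\pFq{1}{2}{2}{a+1,b+1}{z},
\]
which follows directly from the series definition (using $(1)_m = m!$ and $(m+1) = (2)_m / m!$). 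With $a = \frac{k+3}{2}$ and $b = \frac{k+4}{2}$ this produces precisely the $\pFq{1}{2}{2}{\frac{k+5}{2},\frac{k+6}{2}}{-\pi^2/4}$ appearing in the corollary, and the factor $1/(ab) = 4/((k+3)(k+4))$ accounts for the $(k+3)(k+4)$ in the denominator of that hypergeometric's coefficient.

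Combining the three pieces (the $\pFq{1}{2}{1}{\frac{k+4}{2},\frac{k+5}{2}}{-\pi^2/4}$ term inherited directly from the first summand of~\eqref{eq:moments}, together with the $\pFq{1}{2}{1}{\frac{k+3}{2},\frac{k+4}{2}}{-\pi^2/4}$ and $\pFq{1}{2}{2}{\frac{k+5}{2},\frac{k+6}{2}}{-\pi^2/4}$ terms produced by $\tfrac{4}{\pi}\, h'(2)$) and carrying through the overall prefactor $\tfrac{1}{(k+1)(k+2)}$, routine algebraic simplification yields the stated formula. The main obstacle is simply the bookkeeping through the Taylor expansion and ensuring the hypergeometric derivative identity produces exactly the shape of ${}_1F_2$ that appears in the target expression; once these are in hand the calculation is mechanical. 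As a consistency check, and as an alternative proof that avoids the limit entirely, one can instead solve directly the $n=2$ first-order recurrence
\[
I_{2,k} = -\tfrac{k(k-1)}{4}\, I_{2,k-2} + \tfrac{1}{k+1}(\pi/2)^k + \tfrac{k}{\pi}(\pi/2)^{k-1}
\]
displayed just above the corollary, using the same even/odd splitting and hypergeometric identification of finite tails as in the proof of Theorem~\ref{thm:moments}.
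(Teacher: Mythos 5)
Your proposal is correct and follows essentially the same route as the paper, which justifies the corollary precisely by taking the limit of~\eqref{eq:moments} as $n \to 2$ (equivalently, by noting that this limit solves the $n=2$ recurrence displayed just above the corollary). You have simply filled in the details the paper leaves implicit\,---\,the resolution of the $0\cdot\infty$ form via $\tan\frac{\pi}{n} \sim \frac{4}{\pi(n-2)}$ and the derivative identity for ${}_{1}F_{2}$\,---\,and these check out.
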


We point out that the partially oriented flag manifolds $\Fl((1,1,1);\{\{1\},\{2\},\{3\}\})$ and $\Fl((1,1,1),\{\{1\},\{2,3\}\})$ considered in our previous paper~\cite{BPS} are (up to a global scale factor of 2) the lens spaces $L(2;1)$ and $L(4;1)$, respectively, and indeed the expected values of distance that we computed on those spaces were exactly $2I_{2,1} = \frac{2}{\pi} + \frac{\pi}{2}$ and $2I_{4,1} = 1 + \frac{\pi}{4}$.

For small $k$ the finite sum formula~\eqref{eq:first order solution} is typically more useful than~\eqref{eq:moments}\,---\,and, indeed, the finite sum is what we see in \Cref{table:moments}\,---\,but one virtue of \Cref{thm:moments} and \Cref{cor:n=2 moments} is that we can easily determine the asymptotic behavior of $I_{n,k}$ as $k \to \infty$ by retaining only the leading terms in the power series representations of the hypergeometric functions.

\begin{asymptotics}
	For fixed $n\geq 3$, the asymptotic growth of the $k$th moment of distance on $L(n;1)$ as $k \to \infty$ is
	\[
		I_{n,k} \sim \frac{n}{k^2}\!\left(\frac{\pi}{2}\right)^{\!\!k+1} \!\!\!\!\tan \frac{\pi}{n}.
	\]
	For $n=2$, the asymptotic growth of the $k$th moment of distance on $L(2;1) = \mathbb{RP}^3$ is
	\[
		I_{2,k} \sim \frac{2}{k} \!\left(\frac{\pi}{2}\right)^{\!\!k}.
	\]
\end{asymptotics}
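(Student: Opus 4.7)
The plan is to read off both asymptotics directly from the closed-form hypergeometric expressions of Theorem~\ref{thm:moments} and Corollary~\ref{cor:n=2 moments}, exploiting the fact that each ${}_1F_2$ factor in those formulas collapses to its constant term $1$ as $k\to\infty$. The key preliminary observation is that for fixed $z\in\mathbb{R}$ and fixed real constants $a, b, c$,
\[
\pFq{1}{2}{c}{\tfrac{k+a}{2},\tfrac{k+b}{2}}{z} = 1 + O\!\left(\tfrac{1}{k^2}\right)
\]
as $k\to\infty$: once $k$ is large, the $j$th term of the defining power series is dominated in absolute value by $\frac{(|c|)_j}{j!}\,(k/3)^{-2j}\,|z|^j$, so the $j\geq 1$ tail is bounded by a convergent geometric series in $|z|/k^2$.

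For $n\geq 3$, applying this estimate to the three ${}_1F_2$ factors in Theorem~\ref{thm:moments} decomposes $I_{n,k}$ into a sum of three contributions with leading asymptotics
\[
\frac{4}{(k+1)(k+2)(k+3)}\!\left(\frac{\pi}{n}\right)^{\!k+2}\!\!,\quad \frac{n\tan(\pi/n)}{(k+1)(k+2)}\!\left(\frac{\pi}{2}\right)^{\!k+1}\!\!,\quad -\frac{2\tan(\pi/n)}{(k+1)(k+2)}\!\left(\frac{\pi}{n}\right)^{\!k+1}\!\!.
\]
Since $\pi/n < \pi/2$ for every $n\geq 3$, the middle contribution exponentially dominates the other two, yielding $I_{n,k}\sim \frac{n\tan(\pi/n)}{k^2}\bigl(\tfrac{\pi}{2}\bigr)^{k+1}$, as claimed.

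For $n=2$, a parallel argument applies to Corollary~\ref{cor:n=2 moments}: the three ${}_1F_2$ factors there all tend to $1$ with error $O(k^{-2})$, but now they share the common exponential rate $(\pi/2)^k$, and only the first carries an order-one coefficient (the explicit factor of $2$ out front). The other two sit behind the prefactor $\frac{\pi^2}{(k+2)(k+3)}$, so their contributions are $O\bigl(k^{-3}(\pi/2)^k\bigr)$ and the bracket tends to $2$. Hence $I_{2,k}\sim \frac{1}{k+1}(\pi/2)^k\cdot 2\sim \tfrac{2}{k}(\pi/2)^k$.

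The only genuine technical point is the error estimate for ${}_1F_2$, but this is routine: the relevant values $z=-\pi^2/n^2$ and $z=-\pi^2/4$ are fixed and bounded, and no subtle cancellation among the three contributions needs tracking for $n\geq 3$ because the middle term strictly dominates the others on the exponential scale.
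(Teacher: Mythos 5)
Your proposal is correct and follows essentially the same route as the paper, which simply observes that the asymptotics follow ``by retaining only the leading terms in the power series representations of the hypergeometric functions'' in Theorem~\ref{thm:moments} and Corollary~\ref{cor:n=2 moments}; your ${}_{1}F_{2} = 1 + O(k^{-2})$ estimate and the identification of the exponentially dominant term just make explicit the details the paper leaves to the reader. No discrepancies.
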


On the other hand, if we fix $k$ and let $n$ get large, only the middle term in~\eqref{eq:moments} survives:

\begin{cor}\label{cor:large n limit}
	For fixed $k \geq 0$, 
	\[
		\lim_{n \to \infty} I_{n,k} = \frac{\pi}{(k+2)(k+1)} \!\left(\frac{\pi}{2}\right)^{\!\!k+1}\!\!\! \pFq{1}{2}{1}{\frac{k+3}{2}, \frac{k+4}{2}}{-\frac{\pi^2}{4}}.
	\]
	Values for small $k$ are given in \Cref{table:large n limit}.
\end{cor}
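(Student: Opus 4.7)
The plan is to take the limit term-by-term in the closed-form expression~\eqref{eq:moments} from \Cref{thm:moments}. The bracketed expression is a sum of three summands, each of which decouples cleanly into a polynomial/trigonometric prefactor in $n$ times a $\pFq{1}{2}{\cdot}{\cdot,\cdot}{-\pi^2/n^2}$ or $\pFq{1}{2}{\cdot}{\cdot,\cdot}{-\pi^2/4}$. Since ${}_1F_2$ is entire (the series converges for all $z$ when $p \le q$), each hypergeometric factor is a continuous function of its argument, so I can pass the limit $n \to \infty$ inside.

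First I would note that $\pFq{1}{2}{1}{\frac{k+4}{2},\frac{k+5}{2}}{-\pi^2/n^2} \to \pFq{1}{2}{1}{\frac{k+4}{2},\frac{k+5}{2}}{0} = 1$ and likewise $\pFq{1}{2}{1}{\frac{k+3}{2},\frac{k+4}{2}}{-\pi^2/n^2} \to 1$, since only the $n=0$ term of the power series survives at $z=0$. Combined with the prefactor estimates $(\pi/n)^{k+2} \to 0$ and $(\pi/n)^{k+1} \to 0$ (for $k \ge 0$), the first summand in~\eqref{eq:moments} contributes $0$ in the limit, and the subtracted piece $-2(\pi/n)^{k+1}\tan(\pi/n) \cdot {}_1F_2(\cdots;-\pi^2/n^2)$ of the bracketed tangent term also vanishes since $\tan(\pi/n) \sim \pi/n$.

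The only surviving contribution is the middle piece $n\tan(\pi/n) (\pi/2)^{k+1}\, \pFq{1}{2}{1}{\frac{k+3}{2},\frac{k+4}{2}}{-\pi^2/4}$, whose hypergeometric factor is independent of $n$ and whose scalar prefactor satisfies $n\tan(\pi/n) \to \pi$. Dividing by $(k+1)(k+2)$ yields exactly the claimed limit. There is essentially no obstacle here\,---\,this is a straightforward limit computation, a bookkeeping consequence of the explicit formula\,---\,and if desired one could state the result as a corollary whose proof reads, in full, ``immediate from \Cref{thm:moments} on letting $n \to \infty$ in each of the three bracketed terms.''
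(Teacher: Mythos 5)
Your proposal is correct and matches the paper's approach exactly: the paper offers no separate proof of \Cref{cor:large n limit} beyond the remark that only the middle term of~\eqref{eq:moments} survives as $n \to \infty$, which is precisely the computation you carry out (using $n\tan\frac{\pi}{n} \to \pi$ and the vanishing of the $(\pi/n)^{k+2}$ and $(\pi/n)^{k+1}\tan\frac{\pi}{n}$ terms). No gaps; your version is simply a fully written-out form of the paper's one-line justification.
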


\begin{table}
	\begin{center}
	\def\arraystretch{1.2}
	\begin{tabular}{ll}
		$k$ & $\displaystyle\lim_{n \to \infty} I_{n,k}$\\
		\midrule
	 	$0$ & $1$ \\ 
	 	$1$ & $\frac{1}{4}\pi$ \\ 
		$2$ & $-\frac{1}{2}+\frac{\pi  }{8}\pi$ \\ 
	 	$3$ & $\frac{\pi ^2 -6}{16}\pi$ \\ 
	 	$4$ & $\frac{3}{2}+\frac{\pi ^3 -12\pi}{32}\pi$ \\ 
	 	$5$ & $\frac{\pi ^4 -20\pi^2+120}{64}\pi$ \\ 
	 	$6$ & $-\frac{45}{4}+\frac{\pi ^5 - 30 \pi^3 + 360\pi }{128}\pi$ \\
	 	$7$ & $\frac{\pi ^6-42 \pi ^4+840 \pi ^2-5040}{256}\pi$
	\end{tabular}
	\end{center}
	\caption{$\displaystyle \lim_{n \to \infty}I_{n,k}$ for small $k$. The coefficient of $\pi$ is the coefficient of $n \tan \frac{\pi}{n}$ in the corresponding entry in \Cref{table:moments}, and the remaining term is the constant term from \Cref{table:moments}.}
	\label{table:large n limit}
\end{table}

Another way to package the information contained in \Cref{thm:moments} is by computing the moment-generating function of distance:

\begin{mgf}
	For $n \geq 3$, the moment-generating function of distance on $L(n;1)$ is
	\begin{equation}\label{eq:mgf}
		M_n(t) = \frac{2n}{\pi(4+t^2)}\! \left(\! \frac{2(e^{t \pi/n}\!-1)}{t} + \tan \frac{\pi}{n} \left(e^{t\pi/2}\!-e^{t \pi/n}\right)\!\!\right).
	\end{equation}
	For $n=2$, the moment-generating function is
	\[
		M_2(t) = \frac{4}{\pi(4+t^2)}\! \left(\!\frac{2(e^{t \pi/2}\!-1)}{t} + te^{t\pi/2}\! \right).
	\]
\end{mgf}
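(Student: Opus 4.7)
The plan is to sum the Taylor series $M_n(t) = \sum_{k\geq 0}(t^k/k!)\, I_{n,k}$ termwise using the integral representation~\eqref{eq:integral} of $I_{n,k}$, rather than attempting to sum the hypergeometric moment formula. Since geodesic distance in $L(n;1)$ is bounded by $\pi/2$ we have $|I_{n,k}| \leq (\pi/2)^k$, so the series converges for every $t\in\mathbb R$ and exchanging sum and integral on the compact domain is justified by dominated convergence. One checks that~\eqref{eq:integral} also reproduces $I_{n,0}=1$ correctly (the inner integral equals $\tfrac12\cos^2\theta_1$), so after the swap the $u^k/k!$ summand becomes $e^{tu}$ and I obtain
\begin{equation*}
M_n(t) = \frac{2n}{\pi}\int_0^{\pi/n} \sec^2\theta_1 \int_{\theta_1}^{\pi/2} e^{tu}\cos u \sin u\, du\, d\theta_1,
\end{equation*}
reducing the theorem to a closed-form evaluation of this double integral.

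\textbf{Evaluation for $n\geq 3$.} For the inner integral I would use $\sin u\cos u = \tfrac12\sin 2u$ together with the elementary antiderivative $\int e^{tu}\sin 2u\, du = \frac{e^{tu}(t\sin 2u - 2\cos 2u)}{t^2+4}$, giving
\begin{equation*}
\int_{\theta_1}^{\pi/2} e^{tu}\cos u \sin u\, du = \frac{1}{2(t^2+4)}\bigl[2e^{t\pi/2} - e^{t\theta_1}(t\sin 2\theta_1 - 2\cos 2\theta_1)\bigr].
\end{equation*}
Multiplying by $\sec^2\theta_1$ and applying the identities $\sec^2\theta\sin 2\theta = 2\tan\theta$ and $\sec^2\theta\cos 2\theta = 2 - \sec^2\theta$ rewrites the $\theta_1$-integrand (up to the prefactor $1/(t^2+4)$) as
\begin{equation*}
e^{t\pi/2}\sec^2\theta_1 \;-\; e^{t\theta_1}\bigl(t\tan\theta_1 + \sec^2\theta_1\bigr) \;+\; 2e^{t\theta_1}.
\end{equation*}
The crucial observation is the perfect derivative $\frac{d}{d\theta}\bigl(e^{t\theta}\tan\theta\bigr) = e^{t\theta}(t\tan\theta + \sec^2\theta)$, so the middle term integrates exactly to $e^{t\pi/n}\tan(\pi/n)$. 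Combined with $\int_0^{\pi/n}\sec^2\theta_1\, d\theta_1 = \tan(\pi/n)$ and $\int_0^{\pi/n} e^{t\theta_1}\, d\theta_1 = (e^{t\pi/n}-1)/t$, and regrouping, this produces the claimed formula for $n\geq 3$.

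\textbf{The $n=2$ case and the main obstacle.} The $n\geq 3$ formula is formally $\infty\cdot 0$ at $n=2$, since $\tan(\pi/n)\to\infty$ while $e^{t\pi/2}-e^{t\pi/n}\to 0$. I would recover $M_2$ by continuity: writing $n=2+\varepsilon$ and Taylor-expanding gives $\pi/n = \pi/2 - \pi\varepsilon/4 + O(\varepsilon^2)$, hence $\tan(\pi/n) \sim 4/(\pi\varepsilon)$ and $e^{t\pi/2}-e^{t\pi/n} \sim (t\pi\varepsilon/4)e^{t\pi/2}$, so their product tends to $te^{t\pi/2}$ and the prefactor $2n/(\pi(4+t^2))$ tends to $4/(\pi(4+t^2))$, yielding the stated $M_2(t)$. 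Continuity of $M_n(t)$ in $n$ at $n=2$ is itself clear from the double integral, because although $\sec^2\theta_1$ blows up at $\theta_1=\pi/2$, the inner integral vanishes to order $(\pi/2-\theta_1)^2$ there, keeping the full integrand uniformly bounded in a neighborhood of the singularity. The argument is ultimately routine bookkeeping; the likely main obstacle for someone attempting this cold is spotting the perfect derivative $\frac{d}{d\theta}(e^{t\theta}\tan\theta)$, without which one faces $\int e^{t\theta}\sec^2\theta\, d\theta$, which has no elementary closed form and would leave the computation stuck.
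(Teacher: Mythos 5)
Your proposal is correct and follows essentially the same route as the paper: both reduce $M_n(t)$ to the double integral $\frac{2n}{\pi}\int_0^{\pi/n}\sec^2\theta_1\int_{\theta_1}^{\pi/2}e^{tu}\cos u\sin u\,\mathrm{d}u\,\mathrm{d}\theta_1$, evaluate the inner integral via the antiderivative of $e^{tu}\sin 2u$ (the paper's ``integrating by parts twice''), exploit the cancellation coming from $\frac{d}{d\theta}\bigl(e^{t\theta}\tan\theta\bigr)=e^{t\theta}(t\tan\theta+\sec^2\theta)$, and obtain $M_2$ as the limit $n\to 2$. Your termwise summation of the moment series and the explicit justification of continuity at $n=2$ are just slightly more careful packaging of the same argument.
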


\begin{proof}
	By definition,
	\begin{equation}\label{eq:mgf integral}
		M_n(t) = \mathbb{E}(e^{t d}; L(n;1)) = \frac{2n}{\pi}\! \int_0^{\pi/n} \!\!\!\!\!\! \sec^2 \theta_1 \int_{\theta_1}^{\pi/2} \!\! e^{t u} \cos u \sin u \du \dtheta_1
	\end{equation}
	using the same substitution that produced~\eqref{eq:integral}. Using the identity $\sin 2u = 2 \sin u \cos u$ and integrating by parts twice yields 
	\[
		M_n(t) = \frac{2n}{\pi(4+t^2)}\! \left(\! \frac{e^{t\pi/n}\!-1}{t} + \tan\! \frac{\pi}{n}\, e^{t\pi/2} \!-\! \int_0^{\pi/n}\!\!\! e^{t \theta_1} (t\tan \theta_1  + \sec^2 \theta_1 -1) \dtheta_1\!\right)
	\]
	for $n \geq 3$. Integrating the first term inside the integral by parts produces a term which cancels the second, and the rest is straightforward.
	
	For $n=2$, evaluating the indefinite integral~\eqref{eq:mgf integral} boils down to taking the limit of~\eqref{eq:mgf} as $n \to 2$, which produces the desired expression for $M_2(t)$.
\end{proof}

We can recover the probability density function (pdf) $f_n$ of distance as the inverse Laplace transform of $M_n(-t)$:
\begin{align}
	f_2(x) & = \frac{4}{\pi} \sin^2 x \nonumber \\
	f_n(x) & = \frac{2n}{\pi} \!\left(\sin^2 x + \Theta\!\left(x-\pi/n\right)\!\left(-\sin^2 x + \sin x \cos x \tan \frac{\pi}{n}\right)\! \right), \label{eq:distancePDF}
\end{align}
where $\Theta$ is the Heaviside function which is zero for negative values and 1 for positive values. See \Cref{fig:density}.

\begin{figure}[t]
	\centering
		\includegraphics[height=1.5in]{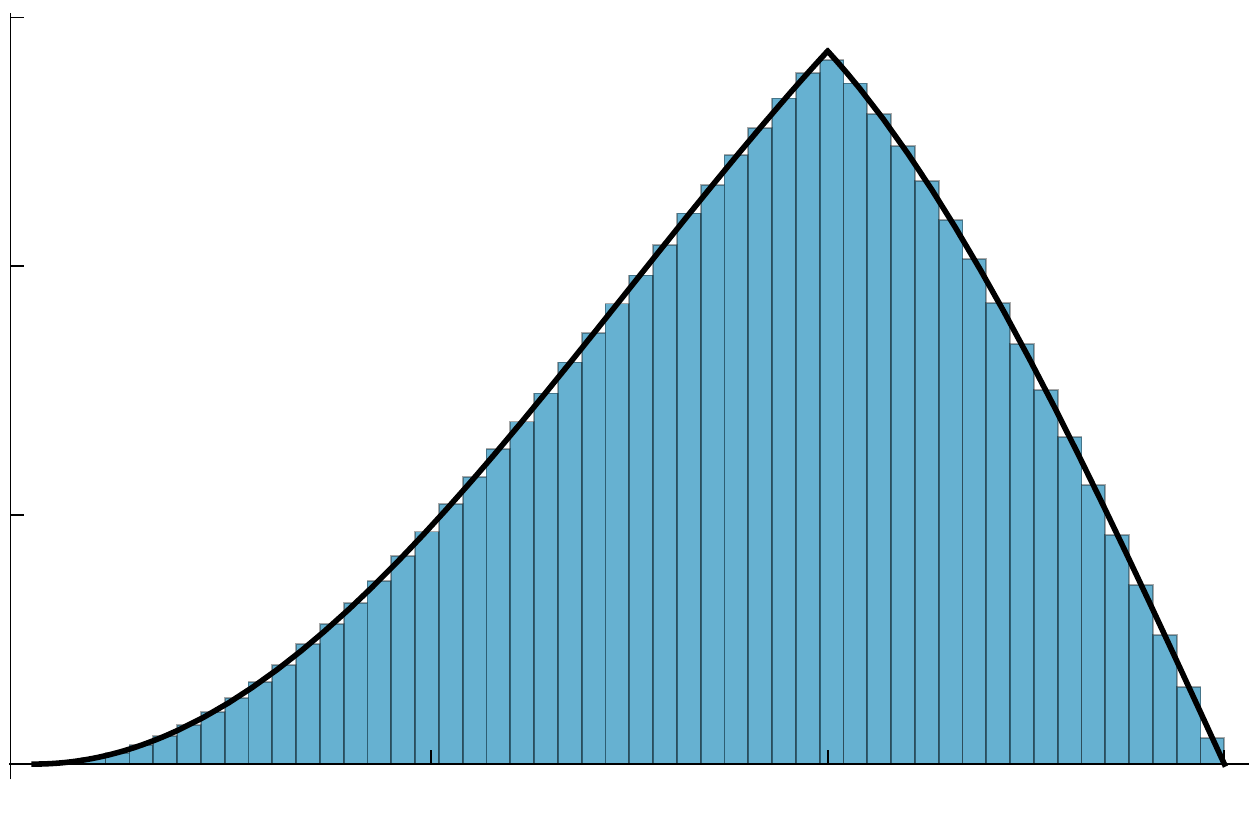}
		\put(-113.5,-5){$\frac{\pi}{6}$}
		\put(-60.5,-5){$\frac{\pi}{3}$}
		\put(-7,-5){$\frac{\pi}{2}$}
		\put(-175,36.4){$\frac{1}{2}$}
		\put(-175,69.6){$1$}
		\put(-175,103.3){$\frac{3}{2}$}
	\caption{Histogram of distances between 10,000,000 random points on $L(3;1)$ generated by \Cref{alg:dist} compared to the pdf ${f_3(x)=\frac{6}{\pi} \left(\sin^2 x + \Theta(x-\pi/3)\!\left(-\sin^2 x + \sqrt{3}\sin x \cos x\right)\!\right)}$.}
	\label{fig:density}
\end{figure}

As $n \to \infty$ we see that $f_n(x) \to \sin 2x$, the pdf of the \emph{sine distribution} introduced by Gilbert in the study of moon craters~\cite{Edwards:2000wk,Gilbert:1893tb}. It is not so surprising to see this distribution: as $n \to \infty$ the lens spaces $L(n;1)$ converge in the Gromov--Hausdorff sense to a 2-sphere of radius $1/2$, and the distance distribution on this sphere is exactly the sine distribution.

In turn, given the pdf, we can integrate to get the cumulative distribution function $F_n(x)$ of distance on $L(n;1)$:
\begin{align*}
	F_2(x) & = \frac{2}{\pi}(x - \sin x \cos x) \\
	F_n(x) & = \frac{n}{\pi}\!\left(x - \sin x \cos x + \Theta\!\left(x-\pi/n\right)\!\left(\frac{\pi}{n}-x+\sin x \cos x - \cos^2 x \tan\frac{\pi}{n} \right)\!\right)\!;
\end{align*}
see \Cref{fig:cdfs}.

\begin{figure}[t]
	\centering
		\includegraphics[height=1.5in]{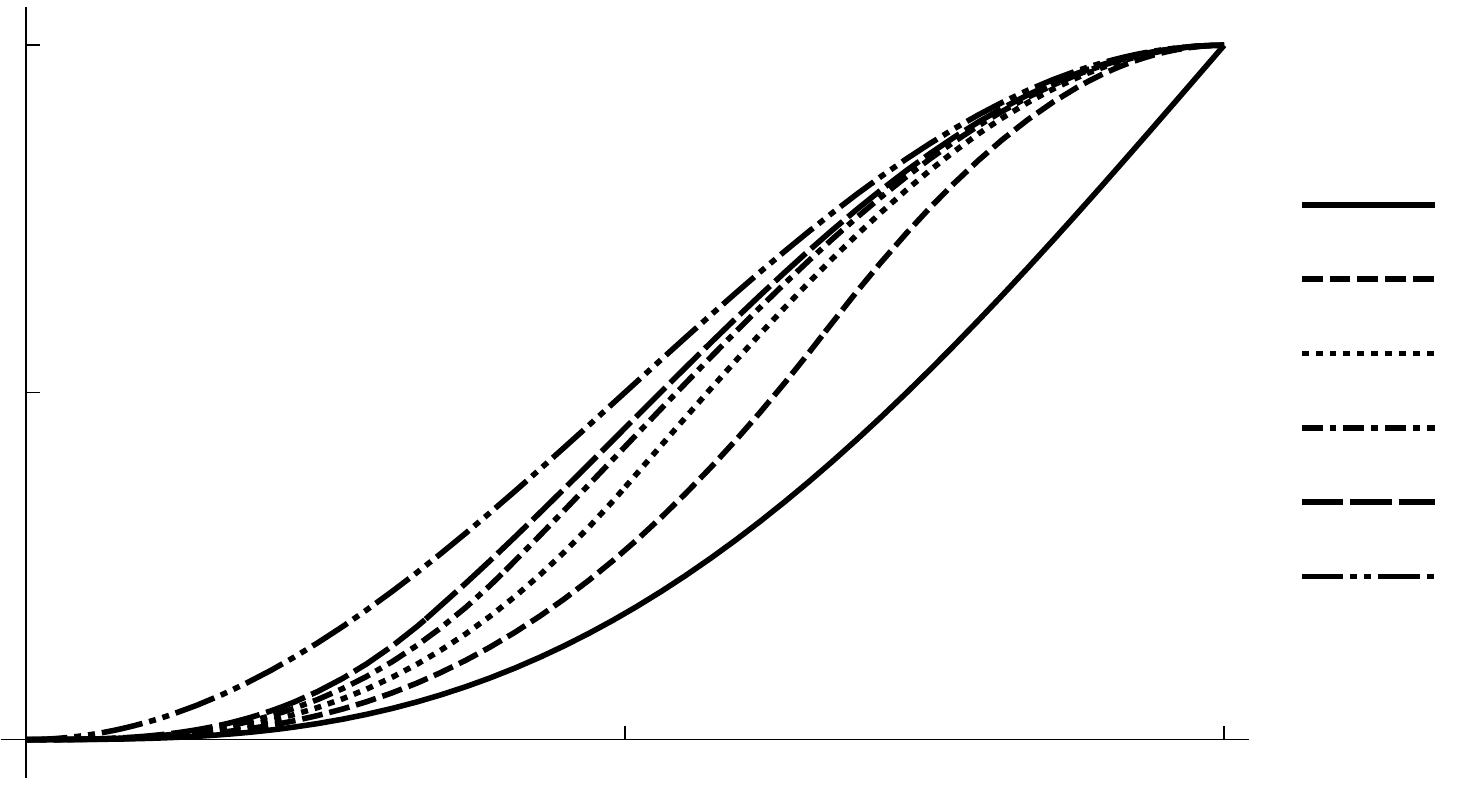}
		\put(-4,78.5){\tiny $n=2$}
		\put(-4,68.28){\tiny $n=3$}
		\put(-4,58.06){\tiny $n=4$}
		\put(-4,47.84){\tiny $n=5$}
		\put(-4,37.62){\tiny $n=6$}
		\put(-4,27.4){\tiny $n\to \infty$}
		\put(-209.5,51.3){$\frac{1}{2}$}
		\put(-209.5,99){$1$}
		\put(-122,-5){$\frac{\pi}{4}$}
		\put(-39.5,-5){$\frac{\pi}{2}$}
	\caption{The cumulative distribution function of distance on $L(n;1)$ for $2\leq n \leq 6$ and in the limit as $n \to \infty$.}
	\label{fig:cdfs}
\end{figure}

By definition,
\[
	F_n(x) = \mathbb{P}(d(p,q) \leq x) = \frac{\vol B_q(x)}{\vol L(n;1)}
\]
where $q \in L(n;1)$ is any fixed point and $p \in L(n;1)$ is random; since $L(n;1)$ is homogeneous this is independent of $q$. Hence, we can compute the volume $V_n(r) := \vol B_q(r)$ of a ball of radius $r$ in $L(n;1)$ as 
\[
	V_n(r) = \vol (L(n;1)\!)F_n(r) = \frac{2\pi^2}{n} F_n(r).
\]
This proves:
\begin{ballvol}
	For $n \geq 2$, the volume of a ball of radius $r$ in $L(n;1)$ is
	\[
		V_n(r) = \begin{cases} 2\pi(r-\sin r \cos r) & \text{if } r \leq \frac{\pi}{n} \\ \frac{2\pi^2}{n} - 2\pi \cos^2 r \tan\frac{\pi}{n} & \text{else.}\end{cases}
	\]
\end{ballvol}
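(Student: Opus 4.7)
The plan is to exploit the probabilistic interpretation of ball volume on a homogeneous space. Since $L(n;1)$ is homogeneous, the volume $\vol B_q(r)$ of a metric ball depends only on $r$, not on the center $q$. Taking independent uniform random points $p,q$ in $L(n;1)$, the distance CDF satisfies
\[
F_n(r) = \mathbb{P}(d(p,q) \leq r) = \frac{\vol B_q(r)}{\vol L(n;1)} = \frac{n}{2\pi^2} V_n(r),
\]
so $V_n(r) = \frac{2\pi^2}{n} F_n(r)$. The theorem therefore reduces to producing an explicit formula for $F_n$ and splitting on the piecewise structure of that formula.

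To obtain $F_n$, I would first extract the pdf $f_n$ from the moment-generating function of \Cref{thm:mgf} by inverse Laplace transform of $M_n(-t)$. The exponentials $e^{t\pi/n}$ and $e^{t\pi/2}$ appearing in $M_n(t)$ become shift operators, producing Heaviside cutoffs $\Theta(x - \pi/n)$ and $\Theta(x-\pi/2)$, while the rational factor $\frac{1}{t(4+t^2)}$ inverts via partial fractions to elementary trigonometric pieces such as $\sin^2 x$ and $\sin x \cos x$. Collecting terms gives the piecewise pdf
\[
f_n(x) = \frac{2n}{\pi}\!\left(\sin^2 x + \Theta(x-\pi/n)\!\left(-\sin^2 x + \sin x \cos x \tan\tfrac{\pi}{n}\right)\!\right),
\]
and integrating from $0$ to $x$ yields the closed-form $F_n$.

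Finally, multiplying $F_n$ by $\frac{2\pi^2}{n}$ and splitting on the Heaviside cutoff delivers the stated piecewise formula: for $r \leq \pi/n$ only the $\sin^2$ branch contributes, producing $2\pi(r - \sin r \cos r)$; for $r > \pi/n$ the $r$ and $\sin r \cos r$ contributions telescope, leaving $\frac{2\pi^2}{n} - 2\pi \cos^2 r \tan\tfrac{\pi}{n}$. The main obstacle is purely bookkeeping in the inverse Laplace transform: keeping track of the Heaviside-induced shifts and of the partial fraction decomposition. The $n=2$ case is a quick sanity check: the alternate MGF expression has no Heaviside term, but since the diameter of $\mathbb{RP}^3$ is $\pi/2 = \pi/n$, every admissible $r$ lies in the first case of the piecewise formula and the second branch is vacuous; consistency at $r = \pi/2$ is also immediate since $\cos^2(\pi/2) = 0$ makes the second branch equal the full volume $2\pi^2/n$.
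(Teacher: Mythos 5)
Your proposal is correct and follows essentially the same route as the paper: both use homogeneity to write $V_n(r) = \vol(L(n;1))\,F_n(r) = \frac{2\pi^2}{n}F_n(r)$, obtain the pdf as the inverse Laplace transform of $M_n(-t)$ (with the exponentials producing the Heaviside cutoff at $\pi/n$), and integrate to get the piecewise CDF. Your observation that the second branch is vacuous for $n=2$ because the diameter $\pi/2$ equals the injectivity radius also matches the remark in the paper.
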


Since the diameter of $L(n;1)$ is $\frac{\pi}{2}$, $V_n$ is only defined on $[0,\frac{\pi}{2}]$, so we never reach the second case when $n=2$. Also, $2\pi(r-\sin r \cos r)$ is simply the volume of a ball of radius $r$ in $\Sph^3$; not surprisingly, things get interesting only when $r>\frac{\pi}{n}$, the injectivity radius of $L(n;1)$.

Thinking in these geometric terms, the pdfs from~\eqref{eq:distancePDF} are scaled areas of spheres. Rescaling by the same $\frac{2\pi^2}{n}$ factor as above yields the surface area $A_n(r)$ of the sphere of radius $r$ centered at any point in $L(n;1)$:
\[
	A_n(r) = \begin{cases} 4\pi \sin^2 r & \text{if } r \leq \frac{\pi}{n} \\ 4\pi \sin r \cos r \tan\frac{\pi}{n} & \text{else} . \end{cases}
\]

\section{Concluding Remarks}\label{FIN}
Three-dimensional lens spaces are a family of topological/geometric objects that have played a historical role in the development of manifold theory. Their interest derives both from their ease of construction and as examples of manifolds exhibiting unusual phenomena. They appear across several disciplines including topology, geometry, cosmography, and data science, and are a natural setting for spherical data with cyclic symmetries. While lens spaces have been well studied from varying perspectives, we are unaware of other sources which consider distance distributions on them.

Distance distributions have been used in geometric classification and can be used to understand general metric measure spaces. While they can often be approximated effectively using Monte Carlo techniques,  it would be interesting to determine analytic expressions for distance distributions on a broader class of manifolds. For manifolds which are not homogeneous spaces, the distribution of distances from a fixed point depends on the point. In other words, the volume formula for a ball is dependent on the location of the center of the ball in the manifold. In turn, integrating the distribution of distances from a fixed point as the fixed point varies over the manifold yields the distribution of distances between pairs of random points.

Non-homogeneous lens spaces, both in three and in higher dimensions, are particularly tractable examples of non-homogeneous manifolds, so in these spaces it may be feasible to find analytic expressions for the distributions of distances both from a fixed point and between random points.

\section*{Acknowledgments}

We thank all the participants in the Pattern Analysis Lab at Colorado State University for their energy, ideas, and ongoing inspiration, Tom Needham for helpful conversations, and the National Science Foundation (CCF--BSF:CIF \#1712788, ATD \#1830676, CP) and the Simons Foundation (\#354225, CS) for their support.

\bibliographystyle{plain}
\bibliography{bib}

\begin{thebibliography}{10}

\bibitem{AFS12}
Luis Alday, Martin Fluder, and James Sparks.
\newblock {The large $N$ limit of M2-branes on lens spaces}.
\newblock {\em Journal of High Energy Physics}, 2012:57, 2012.

\bibitem{ALEX}
James~Waddell Alexander.
\newblock Note on two three-dimensional manifolds with the same group.
\newblock {\em Transactions of the American Mathematical Society},
  20(4):339--342, 1919.

\bibitem{Aurich:2012by}
Ralf Aurich and Sven Lustig.
\newblock {A survey of lens spaces and large-scale cosmic microwave background
  anisotropy}.
\newblock {\em Monthly Notices of the Royal Astronomical Society},
  424(2):1556--1562, 2012.

\bibitem{BPS}
Brenden Balch, Chris Peterson, and Clayton Shonkwiler.
\newblock Expected distances on manifolds of partially oriented flags.
\newblock Preprint, 2020, {\tt arXiv:2001.07854 [math.DG]}.

\bibitem{Berrendero:2016di}
Jos{\'e}~R. Berrendero, Antonio Cuevas, and Beatriz Pateiro-L{\'o}pez.
\newblock {Shape classification based on interpoint distance distributions}.
\newblock {\em Journal of Multivariate Analysis}, 146:237--247, 2016.

\bibitem{Bonetti:2005bm}
Marco Bonetti and Marcello Pagano.
\newblock {The interpoint distance distribution as a descriptor of point
  patterns, with an application to spatial disease clustering}.
\newblock {\em Statistics in Medicine}, 24(5):753--773, 2005.

\bibitem{Boutin:2004gb}
Mireille Boutin and Gregor Kemper.
\newblock {On reconstructing $n$-point configurations from the distribution of
  distances or areas}.
\newblock {\em Advances in Applied Mathematics}, 32(4):709--735, 2004.

\bibitem{Brinkman:2012ef}
Daniel Brinkman and Peter~J. Olver.
\newblock {Invariant histograms}.
\newblock {\em The American Mathematical Monthly}, 119(1):4--24, 2012.

\bibitem{brody}
Elmer~Julian Brody.
\newblock {The topological classification of the lens spaces}.
\newblock {\em Annals of Mathematics, Second Series}, 71(1):163--184, 1960.

\bibitem{grass}
Alan Edelman, Tom{\'a}s~A. Arias, and Steven~T. Smith.
\newblock The geometry of algorithms with orthogonality constraints.
\newblock {\em SIAM Journal on Matrix Analysis and Applications},
  20(2):303--353, 1998.

\bibitem{Edwards:2000wk}
Anthony William~Fairbank Edwards.
\newblock Gilbert's sine distribution.
\newblock {\em Teaching Statistics}, 22(3):70--71, 2000.

\bibitem{Gilbert:1893tb}
Grove~Karl Gilbert.
\newblock The moon's face: {A} study of the origin of its features.
\newblock {\em Bulletin of the Philosophical Society of Washington},
  12:241--292, 1893.

\bibitem{GR}
Izrail~S. Gradshteyn and Iosif~M. Ryzhik.
\newblock {\em {Table of Integrals, Series, and Products}}.
\newblock Elsevier, Amsterdam, eighth edition, 2015.

\bibitem{Gray:1979ih}
Alfred Gray and Lieven Vanhecke.
\newblock {Riemannian geometry as determined by the volumes of small geodesic
  balls}.
\newblock {\em Acta Mathematica}, 142(3--4):157--198, 1979.

\bibitem{Ikeda:1979wb}
Akira Ikeda and Yoshihiko Yamamoto.
\newblock {On the spectra of 3-dimensional lens spaces}.
\newblock {\em Osaka Journal of Mathematics}, 16(2):447--469, 1979.

\bibitem{Lehoucq:2003js}
Roland Lehoucq, Jean-Philippe Uzan, and Jeffrey Weeks.
\newblock {Eigenmodes of lens and prism spaces}.
\newblock {\em Kodai Mathematical Journal}, 26(1):119--136, 2003.

\bibitem{Memoli:2011jg}
Facundo M{\'e}moli.
\newblock {Gromov--Wasserstein distances and the metric approach to object
  matching}.
\newblock {\em Foundations of Computational Mathematics}, 11(4):417--487, 2011.

\bibitem{Memoli:2018wk}
Facundo M{\'e}moli and Tom Needham.
\newblock {Gromov--Monge quasi-metrics and distance distributions}.
\newblock Preprint, 2018, {\tt arXiv:1810.09646 [math.MG]}.

\bibitem{Osada:2002tq}
Robert Osada, Thomas Funkhouser, Bernard Chazelle, and David Dobkin.
\newblock Shape distributions.
\newblock {\em ACM Transactions on Graphics}, 21(4):807--832, 2002.

\bibitem{Polanco:2019ug}
Luis Polanco and Jose~A. Perea.
\newblock {Coordinatizing data with lens spaces and persistent cohomology}.
\newblock In Zachary Friggstad and Jean-Lou De~Carufel, editors, {\em
  Proceedings of the 31st Canadian Conference on Computational Geometry (CCCG
  2019)}, pages 49--58, 2019.

\bibitem{PY03}
J\'ozef~H. Przytycki and Akira Yasukhara.
\newblock Symmetry of links and classification of lens spaces.
\newblock {\em Geometriae Dedicata}, 98(1):5--61, 2003.

\bibitem{reidemeister}
Kurt Reidemeister.
\newblock {Homotopieringe und Linsenr{\"a}ume}.
\newblock {\em Abhandlungen aus dem Mathematischen Seminar der Universit{\"a}t
  Hamburg}, 11(1):102--109, 1935.

\bibitem{SL05}
Paolo Salvatore and Riccardo Longoni.
\newblock Configuration spaces are not homotopy invariant.
\newblock {\em Topology}, 44(2):375--380, 2005.

\bibitem{Tanaka:1979ue}
Minoru Tanaka.
\newblock {Compact Riemannian manifolds which are isospectral to
  three-dimensional lens spaces. II}.
\newblock {\em Proceedings of the Faculty of Science of Tokai University},
  14:11--34, 1979.

\bibitem{Tietze}
Heinrich Tietze.
\newblock {\"Uber die topologischen Invarianten mehrdimensionaler
  Mannigfaltigkeiten}.
\newblock {\em Monatshefte f\"ur Mathematik und Physik}, 19:1--118, 1908.

\bibitem{TOM19}
Shinya Tomizawa.
\newblock Multicharged black lens.
\newblock {\em Physical Review D}, 100(2):024056, 2019.

\bibitem{Uzan:2004dr}
Jean-Philippe Uzan, Alain Riazuelo, Roland Lehoucq, and Jeffrey Weeks.
\newblock {Cosmic microwave background constraints on lens spaces}.
\newblock {\em Physical Review D}, 69(4):043003, 2004.

\bibitem{Viana:2018ho}
Celso Viana.
\newblock {The isoperimetric problem for lens spaces}.
\newblock {\em Mathematische Annalen}, 374(1):475--497, 2018.

\bibitem{Wolf:2011tw}
Joseph~A. Wolf.
\newblock {\em {Spaces of Constant Curvature}}.
\newblock AMS Chelsea Publishing, Providence, RI, sixth edition, 2011.

\end{thebibliography}
\end{document}